\newtheorem{theorem}{Theorem}[section]
\newtheorem{lemma}{Lemma}[section]
\newenvironment{proof}[1][Proof]{\par\noindent\textbf{#1.} }{\hfill~\rule{0.5em}{0.5em}\medskip}
\newcommand{\N}{\mathbb{N}}
\newcommand{\bbr}{\mathbb{R}}
\newcommand{\E}{\mathbb{E}}
\newcommand{\call}{\cal{L}}
\def\calr{{\cal R}}
\def\cala{{\cal A}}
\renewcommand{\P}{\mathbb{P}}
\def\l{\ell}
\numberwithin{equation}{section}
\def\var{\mathop{\rm Var}}
\let\epsilon=\varepsilon
\def\ep{\varepsilon}
\begin{document}
\title{Sparse long blocks and the variance of the length\\
 of longest common subsequences in random words}
\author{S.~Amsalu\thanks{Department of Information Science, 
Faculty of Informatics,
University of Addis Ababa, Ethiopia}
 \and C.~Houdr\'e\thanks{School of Mathematics, Georgia
 Institute of Technology, Atlanta, GA 30332
\hfill\break {\tt houdre@math.gatech.edu}. Supported in part by
Simons Foundation grant 246283 and by a Simons Fellowship grant 267336}
\and H.~Matzinger\thanks{School of Mathematics, Georgia
 Institute of Technology, Atlanta, GA 30332
\hfill\break {\tt matzi@math.gatech.edu}. Supported in part by
Simons Foundation grant 317900} }

\maketitle

\begin{abstract}
Consider two independent random  strings having  same length and
 taking values uniformly 
in a common finite alphabet.  We study the order of the variance 
of the length of the longest common subsequences (LCS) of these strings when     
long blocks, or other types of atypical substrings,
 are sparsely added into one of them.
Under weak conditions on the derivative
 of the mean LCS-curve, 
the order of the variance of the LCS is shown to be linear  
in the length of the strings.  
We also argue that
our proofs carry over to many models used
by computational biologists to simulate DNA-sequences.
This is the first result where
the open question of the order of the fluctuation
of the LCS of random strings is solved for a realistic 
model. Until now, this type of result 
had only been established for low entropy cases.
\end{abstract}

\section{Introduction} 

Let $x$ and $y$ be two finite strings.  A common subsequence 
of $x$ and $y$ is a subsequence which is a subsequence 
of both $x$ and $y$, while a longest common subsequence (LCS)  
is a common subsequence of maximal length.
  Common subsequences 
can be represented via alignments, and for this 
the letters which are part of the subsequence get aligned
with identical letters, while the remaining
letters get aligned with gaps.

Let us give an example of common subsequences
and alignments with gaps:
 take $x=heinrich$ and let $y=enerico$. 
Then $z=ni$ is a common subsequence of $x$ and $y$,
indicating that the string $ni$ can be obtained from both $x$ and $y$
by just deleting letters.
The subsequence $ni$, corresponds to the alignment
with gaps:
$$
\begin{array}{c|c|c|c|c| c|c|c|c|c| c|c|c|c|c}
x& &h& &e&i&n& &r& &i&c& & &h\\\hline
y& & &e& & &n&e& &r&i& &c&o& 
\end{array}
$$
The common subsequence $ni$ is not 
of maximal length, the LCS 
is $enric$, and 
the corresponding alignment is given by:
$$
\begin{array}{c|c|c|c|c|c|c|c|c|c|c|c}
x& &h&e&i&n& &r&i&c& &h\\\hline
y& & &e& &n&e&r&i&c&o& 
\end{array}
$$
Often, a long LCS indicates that the strings are related.  
In this article, we only consider alignments 
which align same letter pairs,  every such 
alignment defines a common subsequence, and 
the length of the subsequence corresponding 
to an alignment is called the {\it score of the alignment}.  
The alignment representing a LCS 
is also called an {\it optimal alignment} (OA). 
In the above example, the length of the LCS is five which is denoted
by:
$$|LCS(heinrich;enerico)|=|LCS(x,y)|=5.$$

Longest Common Subsequences (LCS) and Optimal Alignments (OA)
are important tools used 
for string matching
in Computational Biology and Computational Linguistics
\cite{Capocelli1,watermanintrocompbio,Watermangeneralintro}.
A main application is to the automatic recognition of related DNA pieces. 
In that context, 
it is anticipated that if two DNA-strings have a common ancestor, 
then they will have a long LCS.  Could 
it be that, unrelated (independent) 
strings have nonetheless a long LCS?  How likely is such an event?  
This, of course, depends on the probabilistic model generating the strings. 
To answer the previous questions, the behavior, for $n$ large, of both the 
expectation $\E LC_n$ and 
the variance $\var LC_n$ need to be understood.
Throughout $LC_n$ is the length of the LCS
of the random strings $X= X_1\cdots X_n$ and $Y= Y_1\cdots Y_n$ so that
$$LC_n:=|LCS(X_1X_2\cdots X_n;Y_1Y_2\cdots Y_n)|.$$

The asymptotic behavior of the expectation and the variance of the length 
of the LCS of two independent random strings has been studied by probabilists, physicists,
computer scientists and computational biologists.
It can also be formulated as a last passage percolation problem with dependent weights. 
The problem of finding the fluctuation order for
first and last passage percolation
has been open for a several decades. There 
has been, however, a well-known
breakthrough for a related problem,
that is for the  Longest Increasing
Subsequence (LIS) of a random permutation
\cite{BaikDeiftJohansson99}. 
For the LIS of a uniform random permutation of $\{1,2,\dots, n\}$,
the order of the fluctuation is
the cubic root of the expectation and not its square root
and this is also true for the LCS of two uniform random 
permutations of $\{1,2,\dots, n\}$ (see \cite{K,J,HI}).
For the LIS of a random word,
both the expectation and the variance are linear \cite{ITW1,ITW2,TW}.  
For the LCS of random words, the expectation is of order $n$, and so
if the fluctuations were also
a cubic root of the expectation,  then
$\var LC_n$ should
be of order $n^{2/3}$.
This is the order of magnitude conjectured in
\cite{Sankoff1} 
for which several heuristic proofs have been claimed.  
This conjectured order might even seem more plausible in view of the 
recent solution 
\cite{MMN,MN} to the Bernoulli
matching problem where the variance is shown to be of order $n^{2/3}$.
Although this cubic-root behavior is the correct order for the LCS of two uniform random
permutations of $\{1,\dots, n\}$,
we believe this order to be incorrect for the LCS of random words.
The cubic-root  claims might happen because 
the LCS length can be viewed as a last passage percolation problem 
with dependent weights,  and so, for short sequences 
the dependence in the weights does not have a 
strong influence, the LCS then behaves as if the 
weights were independent.  
However, for short sequences this order might be what one approximately 
observes in simulations.  For the LCS of independent
iid strings the order of magnitude
of the variance is, in general, not known. (Except
for various cases, see
\cite{increasinglcs}, \cite{fluctlcsnotsym}, \cite{VARTheta}, for which 
the variance is asymptotically linear in the length 
of the strings
considered.) 
Below we determine 
the correct asymptotic order of the variance for the LCS
of uniform iid sequences ``with artificially added
impurities" provided any of the condition given in
Subsection \ref{conditions} below hold. These conditions
 are not directly needed for the current
proofs, rather, they are needed for the results obtained
in \cite{amsaluhoudrematzi2} 
which in turn are
used in the present article.

A subadditivity argument  in
\cite{Sankoff1} 
shows  the existence of the limit: 
\begin{equation}\label{eq:sub}
\gamma^*_k:=\lim_{n\rightarrow\infty}\frac{\E\, LC_n}{n},
\end{equation}
where $X$ and $Y$ are two stationary ergodic strings independent of each
other and where the
constant $\gamma^*_k>0$ depends on the distribution
of $X$ and $Y$ and on the size $k$ of the alphabet.
Even for the simplest distributions, such as iid strings
with binary equiprobable letters, the exact value
of $\gamma^*_k$  is unknown, and extensive simulations have been performed
to obtain approximate values \cite{Baeza1999,B,Paterson1,BdM1,BdM2,Deken}.

The speed of convergence to the expected length in \eqref{eq:sub} was further
determined in \cite{Alexander,alexander2}, 
showing that for iid sequences,  
\begin{equation}\label{iidseq}
\gamma^*_k n-C\sqrt{n\log n}\le \E\, LC_n\le\gamma^*_k n,
\end{equation}
where $C>0$ is a constant depending neither on 
$n$ nor on the distribution of $X_1$.

As already mentioned, there exist contradicting
conjectures for the order  of the variance
of the LCS. Our present result (Theorem~\ref{theoremfluct})
establishes the order conjectured 
 in \cite{Waterman-estimation}
for an iid distribution. We prove it, however,
for iid sequences with added impurities (sparse long blocks or
atypical substrings),
assuming also some differentiability conditions on the
mean LCS-cure $\gamma_k$. This list of conditions,
any of them making our main result hold, is
 given in Subsection \ref{conditions}.
They are not used directly in the current proofs,
but rather to make some theorems hold which are given
in \cite{amsaluhoudrematzi2}. 
 The mean LCS-curve is the rescaled 
 expectation of the LCS when the two sequences are taken to be of
different length but in a fixed proportion. (See \eqref{meancurve}.) 
The impurities or ``long blocks" as we call them,
are substrings consisting only of one symbol which can be different
from block to block.
For that model, the variance is shown to be of order 
 $\Theta(n)$, i.e.,
there exist two constants $C_2>C_1>0$ independent of $n$,
such that $C_1n\leq \var LC_n\leq C_2n$, for all
natural number $n$. (Here $LC_n$ is the length
of the LCS of the two independent sequences $X$ and $Y$ of length $n$,
one of the two sequences having sparsely added long 
blocks.)
It is rather interesting 
that the mere differentiability of 
the mean curve at its maximum,  implies
a certain order of magnitude for the variance.
Note that 
\cite{Steele82,Steele86} proved that 
$\var LC_n\leq n$, and so only good lower 
bounds for the variance of $LC_n$ are needed.  (Simulation studies are not
that numerous in case of the variance and at times contradict each other.)
 
Still for iid 
sequences with $k$ equiprobable letters,
the order of $\var LC_n$ remains
unknown. We hope nonetheless, that similar ideas 
could be helpful in fully tackling this problem. 

\centerline {\bf Overview of the main result of this
paper}

We first need a few definitions:
Let $V_1,V_2,\ldots$ and $W_1,W_2,\ldots$ be two independent iid
sequences with $k$ equiprobable letters, and let
$$\gamma^*_k:=\lim_{n\rightarrow\infty}
\frac{\E|LCS(V_1V_2\cdots V_n; W_1W_2\cdots W_n)|}{n}.$$
As already mentioned, the exact value of 
$\gamma^*_k$ is unknown, but lower and upper
bounds are available, e.g.,
\begin{equation}
\label{table}\begin{array}{cc|c|c|c|c}
k       & &2   &3&4&\cdots \\\hline
\gamma^*_k& &0.812 &0.717 &0.654 &\cdots 
\end{array}
\end{equation}
where the precision in the above table is about $\pm 0.01$.
The expected length of the LCS
of two independent iid sequences both of length $n$ is thus about 
$\gamma^*_kn$,
up to an error term of order not more than a
 constant times $\sqrt{n\log n}$ (see \eqref{iidseq}). 

We can also consider two sequences of different lengths,
but in such a way that the two lengths are in a fixed proportion of each
other.
To do so, let 
\begin{equation}
\label{gammaknp}\gamma_k(n,q):=
\frac{\E|LCS(V_1V_2\cdots V_{n-nq}; W_1W_2\cdots W_{n+nq})|}{n},
\end{equation}
where   $q\in[-1,1]$,  and let 
\begin{equation}
\label{meancurve}
\gamma_k(q):=
\lim_{n\rightarrow\infty}\gamma_k(n,q),
\end{equation}
which again exists by subadditivity arguments.
 The function $q\mapsto \gamma_k(q)$ is called the {\it mean LCS-function},
it is symmetric around $q=0$ and concave
and it thus has a maximum at $q=0$ which is equal to $\gamma^*_k$
(see \cite{amsaluhoudrematzi2}).
This function  corresponds to the 
wet-region-shape in first passage percolation.

The main result of this paper is the variance result
  given in Theorem \ref{theoremfluct}. It states  that
  provided certain differentiability conditions
on the mean LCS-curve,
$$\var LC_n=\Theta(n),$$
for a model  with sparse long blocks inserted into an iid sequence
with $k$-equiprobable letters. 
(A  {\it block} is a maximal contiguous
substring consisting of only one symbol.)
The conditions used are given in Subsection \ref{conditions}.

The model considered will be described in detail at
the beginning of Section~\ref{severallong} but 
let us, nevertheless,  already give 
an overview of it. Let $\beta$ and $p$ to be any real constants 
(independent of $n$ and $d$) such that
$$\frac12<\beta<1$$
and
$$0<p<1.$$
Basically, the strings 
$$X=X_1X_2\cdots X_n$$
and
$$Y=Y_1Y_2\cdots Y_n$$
are iid strings except that in some sparse locations
in $X$ we insert long blocks. For the rest $X$  and $Y$
are independent of each other with $k$-equiprobably 
letters. The possible locations in $X$ for the long blocks
are, say, $d,3d,5d,\ldots,2dm-d$, where again $n=2dm$.
For each possible location throw
independently the same (possibly biased) coin to decide whether or not to place 
a long block there, and the probability to place in a given location
a long block is $p$. Placing a long block means that 
a iid piece of equal length is replaced by the long
block. The long blocks have length about
$\l=d^\beta$, so they are much smaller in size then the intervals
$[2id,2(i+1)d]$ which ``each of them hosting at most one such long block".
We take $d$ large, but fixed,
while $n$ goes to infinity.

Let us present an example. Take $d=5$
and $\l=4$, while $m=2$. The length of the sequences
$X$ and $Y$ is thus $n=2dm=20$.
Consider binary sequences
so that $k=2$. We are thus throwing an unbiased coin
independently $20$ times to obtain the sequence $Y$.
For example we could have:
$$Y=00101 11010 00110 10101.$$
Then, we throw our unbiased coin again $20$ times 
to obtain the sequence $X^*$.
The sequence $X^*$ is thus also uniform iid, and  we proceed 
 to add long blocks into $X^*$ in order to obtain 
the string $X$. The potential
places for long blocks are the integer intervals 
$[d(2i-1)-\l/2,d(2i-1)+\l/2]$, where $i=1,2,\ldots,m$.
 In the present example,
there are only $m=2$ such intervals:
\begin{equation}
\label{intervals37}
[3,7]\;\;{\rm and}\;\; [13,17].
\end{equation}
Assume that after having thrown our unbiased coin $n=20$ times,
we obtained for $X^*$ the sequence
$$X^*=01{\bf 010 10}001 01{\bf 001 01}110,$$
where the bold face substrings could
get replaced by long blocks.
The next step is to throw a (possibly
biased)  coin  for each of the intervals which could
get a long block. (We will thus throw the coin $m=2$ times.)
In this way, we decide for each of the
intervals in \eqref{intervals37} whether or not
 there will be a long block covering it.
If the corresponding Bernoulli random variable $Z_i$ is equal to $1$,
then there will
be a long block covering the interval $[d(2i-1)-\l/2,d(2i-1)+\l/2]$ and if
it is equal to
0 then otherwise.
The probability of a long block is thus equal to  $\P(Z_i=1)=p$.
In the present example assume we throw our coin twice 
and obtain $Z_1=1$ and $Z_2=0$, then 
in the interval $[3,7]$ we place a long block, say made up of zeros, 
while in $[13,17]$
we leave things
as they are. With these modifications we obtain:
$$X=01{\bf000 00}001 01{\bf 001 01}110.$$
In other words, to obtain $X$ from $X^*$, we 
simply fill each integer interval $$[d(2i-1)-\l/2,d(2i-1)+\l/2]$$
for which $Z_i=1$, with all the same bits and leave everything else 
unchanged.  In our example,
$$
\begin{array}{c|c|c|c|c| c|c|c|c|c| c|c|c|c|c| c|c|c|c|c| c|c}
X^*& &0&1&{\bf0}&{\bf1}&{\bf 0}&{\bf 1}&{\bf 0}&0&0&1& 0&1&{\bf 0}&{\bf0}&
{\bf1}&{\bf 0}&{\bf1}&1&1&0
\\\hline
X& &0&1&{\bf0}&{\bf0}&{\bf 0}&{\bf 0}&{\bf 0}&0&0&1& 0&1&{\bf 0}&{\bf0}&
{\bf1}&{\bf 0}&{\bf1}&1&1&0 \\\hline 
\end{array}\,.
$$
where the bold digits are the places of (potential) long blocks.

As already mentioned, the 
main result of the present paper (Theorem~\ref{theoremfluct}) 
is that
the order of magnitude of $\var LC_n$ is linear, i.e.,
$$\var LC_n=\Theta(n),$$
for our model of inserted long blocks provided any of the conditions
of Subsection \ref{conditions} hold.

To prove this result,
 we take the order of the 
length of the inserted long blocks larger than $\sqrt{d}$,
but smaller than $d$.
More precisely, 
we take $d$ and $p\in(0,1)$ fixed,
while $n$ the common length of $X$ and $Y$ goes to infinity.
We take a parameter $\beta$ 
not depending on $d$ such that $1/2<\beta<1$, and set
the length of the long blocks to be $\l=d^\beta$.
Our result holds,  for all $d$ large
enough, but fixed, and assuming  a block-length of $\l=d^\beta$ 
($d$ does not need to be very
large for our fluctuation result to hold).
Note also that the length of the long block
does not need to be exactly $\l$, it could be a little
bigger, but this is of no real importance in the present investigation.

\medskip
\centerline{\bf Main idea behind the proof}

In \cite{amsaluhoudrematzi2}, the situation where both sequences have length 
only $2d$ and where 
only one long block  is inserted into the middle of one of the strings 
is studied. 
Again, the long block has length
$d^\beta$ and replaces an iid part of equal length.
It is then  shown in \cite{amsaluhoudrematzi2} that with high probability 
the effect of replacing 
 the long block by an iid part of equal length tends
to increase  the LCS by a quantity proportional to the length
of the long block. 
In the current paper, speaking loosely
 we have the concatenation of many times
the situation described in \cite{amsaluhoudrematzi2}.
  
Now, in \cite{VARTheta}, \cite{BM}
a coupling technique is used to show that the variance
of the LCS  of random strings is proportional
to their length. That technique  allows to show the linear order
for the variance as soon as one is able to prove that a random change
to the strings has a biased effect on the scores. The random
change cannot be operated in one predetermined location
of the strings, but must be operated in a random location which macroscopically
is equally likely to occur in any part of the string.
This technique is used, adapted to our current long block
situation. The random
change we consider in this paper is defined as follows:
we take a long block at random among all long blocks
and reverse it to iid. If we can show that this random change
 has a biased effect 
on the LCS then 
one gets the order 
\begin{equation}
\label{orderlcn}
\var LC_n=\Theta(n).
\end{equation}
This equivalence between the order \eqref{orderlcn}
and the biased effect of a random long block replacement
is stated and proved in Theorem \ref{equivalence}.

As already mentioned,
for a one-long-block-only-situation the biased effect of changing the long block to iid
has already been obtained in \cite{amsaluhoudrematzi2}. 
So, {\it the main thing to prove in the current article is that:
 the biased effect in the one-long block situation with strings of length
$2d$ 
implies a biased effect  in our multiple long block model,
of length ``many times $2d$," studied
in the current article.} Again, in the current paper the long block 
which is turned into iid is chosen at random 
among all long blocks. We only need to prove an expected biased effect
when we chose the long block at random. This implies that
 if a small number of the long blocks when changed to iid do not produce
the desired biased effect it does not matter as long as the majority of
them does! 

\subsection{Conditions for the validity of main result}
\label{conditions}
In order to hold, the main result of the present paper requires
some differentiability condition on $\gamma_k$.
 These conditions are not directly needed in the current proofs,
but they are needed to make the results
of \cite{amsaluhoudrematzi2}, which are needed below, hold.

First note that, since it is concave, $\gamma_k$ 
has non-increasing left and a right derivatives at any $p\in (-1,1)$, 
with $\gamma_k^\prime(p^-) \ge \gamma_k^\prime(p^+)$, while by symmetry, 
$\gamma_k^\prime(p^\pm)=-\gamma_k^\prime((-p)^\mp)$.   

Next, let $0\le p_M < 1$ be 
the largest real for which  
$\gamma_k$ is maximal. Hence, $[-p_M,p_M]$  
is the largest interval on which $\gamma_k$ is everywhere 
equal to its maximal value $\gamma_k(0)$, i.e.,  
$[-p_M,p_M]=\gamma^{-1}(\{\gamma_k(0)\})$.

Our theorems will be verified under any one 
of the following four conditions:  

\begin{enumerate}
\item{} The mean LCS-function $\gamma_k$ is strictly concave in a 
neighborhood of the origin and is 
differentiable at $0$ (and so $p_M=0$ and $\gamma_k^\prime(0) = 0$).  
\item{}The function $\gamma_k$ is differentiable at $p_M$, i.e., $\gamma_k^\prime(p_M^+) = 
\gamma_k^\prime(p_M^-)$ 
and therefore (either by symmetry or since $\gamma_k^\prime(p_M^-) = 0$ 
if $p_M>0$ ) $\gamma_k^\prime(p_M) = 0$.  
\item{} The absolute value of $\gamma_k^\prime(p_M^+)\le 0$ is dominated 
by the absolute value of $\gamma_k(0)-(2/k)$:    
$$\left|\frac{\gamma_k^\prime(p_M^+)}{2}\right|<\left|\frac{\gamma_k(0)}{2}-\frac{1}{k}\right|.
$$
\item{} The function $\gamma_k$ is strictly concave in a neighborhood of the origin 
and its right derivative at the origin is such that:   
$$\left|\frac{\gamma_k^\prime(0^+)}{2}\right|<\left|\frac{\gamma_k(0)}{2}-\frac{1}{k}\right|.
$$

\end{enumerate}

Clearly, $1 \implies 2\implies 3, 1\implies 4$.  

In \cite{amsaluhoudrematzi2}, the main results are proved under
the assumption of Condition 2.  But, it is 
also explained in the summary of the proof (Section 2 there)
how Condition 3 and Condition 4 would work. With 
Condition~3, the notations for the proofs in \cite{amsaluhoudrematzi2} 
would become cumbersome with an additional term appearing everywhere. 
So, in \cite{amsaluhoudrematzi2} it was decided for the formal
proof to stick to Condition 2.   From our simulations,
we have no doubt that even Condition 1 holds.
It should also be noted that Condition
3, unlike the others, can be verified up
to a certain confidence level by Montecarlo 
simulations. This makes this condition rather important.

\subsection{Motivation from biology and possible extensions}

The present paper was partly motivated by remarks from
computational biologists to the effect that DNA distribution
is not homogeneous. Rather there are different parts,
with different biological functions (exon,
coding parts, non-coding parts, $\dots$). These different parts,
 having different
lengths and each having its own distribution,
 are often modeled by computational biologists
using hidden Markov chains; the hidden states determining
the parts. Once the hidden states are determined, 
the DNA-sequence is drawn, by using the corresponding
distribution for each part.

The reader might wonder how realistic our present long block model is,
in view of this hidden-Markov model.
Why did we add long blocks in predetermined
positions and why do they only get added 
into one sequence and not both? Also, in DNA-sequences
there are typically no long blocks.
Let us present the various restrictions of our model
and explain 
which features are present only to simplify the, already involved, notation,
but do not represent a fundamental restriction:

\begin{enumerate}
\item{}The first restriction is that we add long blocks in
predetermined locations. This restriction is only there to simplify notation.
The same proof works if we use a Poisson point process
with intensity-parameter $\lambda=1/2d$ to determine the locations
of the long blocks. Also, we could take the length of the long blocks
to be a geometric random variable with expectation $\l=d^\beta$.
\item{}Another quite unnatural restriction is to put
the long blocks only into one sequence. This is done again to simplify
notations. If the starting location of the long blocks is given by a Poisson
point process with intensity $\lambda=1/2d$, then we can
add long blocks in both sequences $X$ and $Y$. We would then
use independent Poisson point processes 
with the same intensity for both $X$ and $Y$. 
The proofs  presented here work as well for this case.
\item{}The model with long blocks added in Poisson locations,
is very similar to a $2$-state hidden Markov chain. For this we
could take the hidden states to be $L$ and $R$. The state
$L$ would correspond to a long block, while $R$ would
be the places where the string is iid. The transition probabilities
from $L$ to $R$ would be $1/d^\beta$, while from
$R$ to $L$ it would be $1/2d$. Again, 
for this hidden $2$-state model, proofs very similar to the ones presented
here will give the linear order of the variance,  
but the notations would have to become even more cumbersome.
\item{}In DNA-sequence, there are no long strings consisting
only of one symbol. So, the long block model may at first not
look very realistic. However, in place of long blocks, we can take
 pieces generated by another distribution. For this we take
two ergodic distributions. Typically one could use a 
finite Markov chain or a hidden Markov model with finitely
many hidden states. For each different part, we would use the corresponding
distribution. 
We could use a hidden Markov model first to determine
which positions belong to which part and then fill the part
with strings obtained from the corresponding distribution.
(These corresponding distributions will again typically be hidden Markov
with finitely many hidden states or 
Markov or finite Markov, maybe even Gibbs.)
The hidden states could again be $L$ and $R$. (To simplify things
here we assume that there are only two DNA-parts.) But this time the
 state
$L$ would not correspond to a long block. Rather we would have two 
stationary, ergodic distributions 
$\mu_L$ and $\mu_R$. The places with hidden state $R$
would get the DNA-sequence drawn using $\mu_R$, while
for the positions with hidden state $L$, we would draw the 
DNA-sequence from $\mu_L$. The transition probabilities
between $L$ and $R$ would be as before:
from $L$ to $R$ it would be $1/d^\beta$, while from
$R$ to $L$ it would be $1/2d$.
We believe that our  current
approach to determine the order of the variance  could work
for this hidden Markov chain case, 
provided we had:
\begin{equation}
\label{M<R}
\gamma_{L,R}(q)<\gamma_R(q),
\end{equation}
for all $q$ in an appropriate closed interval around $0$.
Here, 
$\gamma_{L,R}(q)$ is the coefficient for the mixed model:
$$\gamma_{L,R}(q):=
\lim_{n\rightarrow\infty}
\frac{\E|LCS(V_1V_2\cdots V_{n-nq}; W_1W_2\cdots W_{n+nq})|}{n}\,, 
$$ when the string $V_1,V_2,\dots$ is drawn according to $\mu_L$ and
$W_1,W_2,\ldots$ is drawn according to $\mu_R$. Similarly,
$\gamma_R(q)$ is the parameter when
both sequences are drawn according to $\mu_R$:
 $$\gamma_R(q):=
\lim_{n\rightarrow\infty}
\frac{\E|LCS(U_1U_2\cdots U_{n-nq}; W_1W_2\cdots W_{n+nq})|}{n}, 
$$
where both sequences $U_1,U_2,\ldots$ and $W_1,W_2,\ldots$ are
drawn independently from each other with distribution
$\mu_R$. The condition \eqref{M<R} makes sense: it seems
clear that when aligning two sequences drawn from
the same distribution typically we should get a longer LCS
 than if we align sequences from a different
distribution. 
This  might be difficult to prove theoretically.
(This is the reason for considering long blocks, since they make
this kind of condition easily verifiable.)
Also, we would need for $\gamma_R$
to satisfy some differentiability property at all its maximal points.
In fact, instead of long blocks, any atypical long substrings such that 
its asymptotic expected LCS is smaller than $\gamma^*_k$ will do.
\item{}A true restriction of our method is that the long blocks are of order
greater than $\sqrt{d}$. Our current methodology does not carry over
when this is lacking. It should be noted however
that different parts (exon, coding, non-coding)
 of DNA are often pretty long,
so the current assumption might not be totally unrealistic.
We do not know how to treat the case of added long blocks
with length below $\sqrt{d}$.
\item{} We also assumed that the long blocks have length
of order below a constant times $d$. For long blocks-lengths
of order $d$ times a constant, a different paper would need to
be written. Clearly this seems within reach, considering
the present results.
\end{enumerate}
Summarizing: if we are willing
to accept the differentiability property of the function $\gamma_R$,
and that the condition \eqref{M<R} holds,
we probably should be able to get
$$\var LC_n=\Theta(n),$$
for a whole range of distributions
used  in practice to model DNA.
We plan to investigate this problem in the future.

Let us briefly describe the content of the rest of the paper.
In the next section,
the problem of finding the order of the variance is first reduced
to the biased effect of long blocks. For this,
we choose one long block at random and change it back to iid.
Theorem~\ref{equivalence} then states that if such a random alteration
has typically a sufficiently strong biased effect,
then the linear order of the variance follows.
After Theorem~\ref{equivalence}, the rest of the section
is  devoted to establishing the biased effect
of the long block replacement. 
Section \ref{gail} is dedicated to proving that the 
biased effect in the one long block
situation implies the biased effect in the multiple long block situation.
The Appendix then explains what adaptation of the one-long block situation
of \cite{amsaluhoudrematzi2}
is needed for the current paper.

\section{Long blocks and the variance}
\label{severallong}

In the present section, we consider strings of length $n$ 
with many long blocks
added. 
This many-long-blocks model was briefly explained
in the introduction and
it is precisely defined now:
First, the string $Y=Y_1\cdots Y_n$ is iid with $k$ equiprobable symbols.
Next, $d$ is taken large, but fixed
as $n$ goes to infinity, and we partition the iid sequence
$X^*=X^*_1\cdots X^*_n$ into pieces of length $2d$,
so that $n=2dm$. 
We then insert, say, 
in the middle of each of these pieces at most one long block,
deciding at random which pieces get a long block of length $\ell$ and which
do not. In all the places where there is no long block, the sequence
is iid with $k$ equiprobable symbols. 
Let us explain in more details how this string $X$ is defined:
For each $i=1,2,\dots, m$, let $J_i$ be the interval
$$J_i:=\left[(2i-1)d-\frac\l2,(2i-1)d+\frac\l2\right],$$
i.e., $J_i$ is the $i$-th place 
 where a long block 
could be introduced. 
We assume that
$X^*=X_1^*X_2^*X_3^*\cdots X_n^*$ is iid uniform.
The string $X$ is equal to $X^*$ everywhere except possibly at
the places where we put long blocks:
$$X_i:=X^*_i\,,\;\forall i\in [1,n]-\bigcup_{j=1}^m J_j.$$
Let $Z_i$ be the Bernoulli random variable which, when equal to one,
places a long block into the interval 
$J_i$. Hence,  
$$Z_i:=1\;\;{\rm implies}\;\;
X_{j_1}=X_{j_2}\;\;\forall j_1,j_2\in J_i,$$
and let
$Z_1,Z_2,\ldots,Z_m$ be iid Bernoulli random variables
with 
$$\P(Z_i=1)=p,$$
where $p\in(0,1)$. Hence, $p$ is nothing but the probability
to have a long block introduced artificially into
one of the possible locations. (The variables $Z_1$, $Z_2$,\dots,$Z_m$
are all independent of $X^*$ and $Y$, and the string $Y=Y_1Y_2\cdots Y_n$
is independent of $X$ and $X^*$.) Moreover the
 strings are drawn from an alphabet 
$\cala=\{\alpha_1,\alpha_2,\dots,\alpha_k\}$ with $k$
equiprobable symbols:
$$\P(X_i=\alpha_j)=\P(X_i^*=\alpha_j)=\P(Y_i=\alpha_j)=\frac{1}{k},$$
for all $i= 1,\dots, n=2dm$ and all $j\in\{1,2,\ldots,k\}$.

We are now ready to state the main result
of the paper. It gives 
the asymptotic  order of the variance
of the LCS of $X$ and $Y$ for the distribution
with many long blocks added. It is valid for any alphabet size $k$.

\begin{theorem} 
\label{theoremfluct}
Let $X$ and $Y$ be
two independent strings of length $n=2dm$ drawn
from an alphabet with $k$ equiprobable letters, $k\ge 2$.
Let $Y$ be iid and let $X$ be a string
with artificially long blocks randomly inserted into
some of the location $J_1,J_2,\ldots, J_m$
where $J_i=\left[(2(i-1)d-\frac\ell 2, 2(i-1)d+\frac\ell 2\right]$,
$i=1,\dots, m$ and $\ell=d^\beta$, $\frac12<\beta <1$.
Each of the locations has a probability $p$ to
receive a long block independently of the others.
Outside the long block areas, the string $X$ is
iid.
Let the concave function $\gamma_k$ 
be differentiable at its maxima,
then there exists $d_1$
such that for all $d\geq d_1$ independent of $n$, 
$$\var LC_n=\Theta(n).$$
\end{theorem}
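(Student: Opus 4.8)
The plan is to prove the two matching bounds $\var LC_n\le C_2n$ and $\var LC_n\ge C_1n$ separately, the upper bound being the easy half. Flipping a single letter of $Y$ or of the background string $X^*$ changes $LC_n$ by at most $1$, while flipping one of the $m$ Bernoulli variables $Z_i$ rewrites at most $\l+1$ letters of $X$ and hence moves $LC_n$ by at most $\l+1=d^\beta+1$; so the bounded--difference (Efron--Stein) inequality gives $\var LC_n\le\tfrac12\big(2n+m(\l+1)^2\big)=O(n)$, the implied constant depending only on the fixed parameter $d$. (Alternatively one may invoke the bound $\var LC_n\le n$ of \cite{Steele82,Steele86}.) Thus the entire content of the theorem is the lower bound.

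For the lower bound I would exploit that $LC_n$ is a function of the mutually independent inputs: the iid letters of $Y$, the iid letters of $X^*$, and the iid variables $Z_1,\dots,Z_m$. Retaining only the first--order (single--coordinate) terms of the Hoeffding/ANOVA decomposition of $LC_n$ with respect to these inputs, and discarding the rest, yields
\begin{equation}\label{eq:anova}
\var LC_n \;\ge\; \sum_{i=1}^m \var\big(\E[LC_n\mid Z_i]\big)
\;=\; p(1-p)\sum_{i=1}^m \big(\E[LC_n\mid Z_i=1]-\E[LC_n\mid Z_i=0]\big)^2 .
\end{equation}
Hence it suffices to produce a constant $c=c(d,p,k)>0$, independent of $n$, such that the \emph{marginal biased effect} of every location obeys
\begin{equation}\label{eq:bias}
\big|\,\E[LC_n\mid Z_i=1]-\E[LC_n\mid Z_i=0]\,\big|\;\ge\; c ,\qquad i=1,\dots,m.
\end{equation}
Indeed, inserting \eqref{eq:bias} into \eqref{eq:anova} and using $m=n/2d$ gives $\var LC_n\ge p(1-p)c^2\,n/2d=\Theta(n)$, the point being that $d$ is held fixed as $n\to\infty$, so $1/2d$ is a constant. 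This realizes, through the Hoeffding decomposition, the reduction to a biased effect of the blocks that is the subject of Theorem~\ref{equivalence}.

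It remains to establish \eqref{eq:bias}. For a \emph{single} long block inserted into an otherwise purely iid pair of strings, the statement that switching the block on versus off shifts the expected score by at least a fixed positive amount is exactly the biased effect proved in \cite{amsaluhoudrematzi2}; this is where differentiability of $\gamma_k$ at its maximum enters, the block forcing an optimal alignment to consume near $J_i$ unequal proportions of $X$ and $Y$, and the existence of $\gamma_k'(0)=0$ (by symmetry and concavity) rendering the induced first--order change of the mean score a nondegenerate constant rather than a degenerate or sign--ambiguous one. To deduce the many--block estimate \eqref{eq:bias}, I would argue that flipping $Z_i$ is a local operation at scale $d$: the locations $J_1,\dots,J_m$ are spaced $2d$ apart whereas each block has length $\l=d^\beta$, so a window $W_i$ of width of order $d$ centred at $J_i$ contains that block and no other. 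Since the transversal fluctuations of an optimal alignment over a window of width $O(d)$ are only of order $\sqrt d$, and since $\beta>1/2$ forces $\l=d^\beta\gg\sqrt d$, the deterministic imprint of the block dominates this wandering; conditioning on the letters and on the $Z_j$, $j\ne i$, outside $W_i$ (equivalently, on where an optimal alignment crosses the two edges of $W_i$), the conditional effect of $Z_i$ then matches the isolated--block effect of \cite{amsaluhoudrematzi2} up to an error that is $o(1)$ as $d\to\infty$, and averaging over the outside environment retains a bias of magnitude at least some $c>0$ once $d\ge d_1$.

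The crux, and the step I expect to be hardest, is making this localization quantitative: $LC_n$ is a global optimum, so a priori the block at $J_i$ could influence matches arbitrarily far away, and this must be ruled out. What is required is a fluctuation estimate confining an optimal alignment to an $O(\sqrt d)$ tube over windows of width $O(d)$, strong enough that the boundary crossing points of $W_i$ concentrate on that scale, together with a stability estimate showing that conditioning on this boundary data decouples the interior of $W_i$ from the rest and reduces the conditional bias to the one--block quantity with a controlled error. Both ingredients use $\beta>1/2$ essentially: it is precisely $\l>\sqrt d$ that lets the block's signal survive the $\sqrt d$--scale meandering of the alignment, which is also why the argument is not claimed for blocks shorter than $\sqrt d$ (cf.\ restriction~5 of the introduction). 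Granting \eqref{eq:bias}, the theorem follows by combining it with \eqref{eq:anova} for the lower bound and with the bounded--difference estimate (or \cite{Steele82,Steele86}) for the upper bound.
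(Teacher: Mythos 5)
Your two reductions are sound: the bounded-difference upper bound is fine (the paper just cites \cite{Steele82,Steele86} for $\var LC_n\le n$), and the first-order Hoeffding/ANOVA inequality $\var LC_n\ge\sum_{i=1}^m\var\bigl(\E[LC_n\mid Z_i]\bigr)=p(1-p)\sum_{i=1}^m\bigl(\E[LC_n\mid Z_i=1]-\E[LC_n\mid Z_i=0]\bigr)^2$ is correct for independent inputs. This is a genuinely different, and in fact more self-contained, reduction than the paper's Theorem~\ref{equivalence}, which instead realizes $LC_n$ as $f(N)$ with $N$ binomial, builds a coupled chain $X(m),X(m-1),\dots$ by switching blocks off one at a time, and argues that $f$ grows like a biased random walk (invoking techniques from \cite{BM}). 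Your route would be a legitimate substitute for that theorem, \emph{provided} the per-location bias is actually proved.

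That is where the genuine gap lies. Your proof of the bias rests on two localization claims: (i) the global optimal alignment crosses a window $W_i$ of width $O(d)$ at points fluctuating by only $O(\sqrt d\,)$, and (ii) conditioning on these crossing points decouples the interior of $W_i$ from the rest up to $o(1)$. Neither statement is proved in \cite{amsaluhoudrematzi2} or anywhere in the cited literature; transversal-fluctuation bounds for optimal LCS alignments are precisely the kind of quantity whose order is unknown (this is tied to the open fluctuation-exponent questions discussed in the introduction), and as stated (i) is not even a quantity local to the window: the crossing points of a \emph{global} optimum through $W_i$ could a priori wander on a scale growing with $n$, not with $d$. The paper's actual solution to ``the block could influence matches arbitrarily far away'' avoids localization altogether: it restricts attention to the partition $\vec r=(r_0,\dots,r_m)$ of $Y$ induced by constrained alignments, shows in Lemma~\ref{lemmaK} --- via concavity of $\gamma_k$, Hoeffding's inequality, and a union bound over all $\binom{n}{m}$ partitions --- that with probability $1-e^{-\Theta(n)}$ every optimal partition has all but a fraction $2p\epsilon$ of the lengths $r_i-r_{i-1}$ in the constant-factor interval \eqref{interval} around $2d$ (a macroscopic balance statement, far weaker than $\sqrt d$-scale confinement), and then, for each \emph{fixed} partition (so the $Y$-pieces are deterministic intervals, hence iid, which disposes of your decoupling issue), applies the one-block bound of \cite{amsaluhoudrematzi2} piecewise with another union bound (Lemma~\ref{lemmaQ}); taking $d\ge d_1$ is exactly what beats the entropy terms of order $(1+\ln 2d)/2d$ produced by these union bounds. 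To complete your argument you would either need to import this partition/union-bound mechanism and then convert the paper's high-probability conditional bias \eqref{bias} into a lower bound on $\sum_i\bigl(\E[LC_n\mid Z_i=1]-\E[LC_n\mid Z_i=0]\bigr)^2$ (feasible, e.g.\ via Cauchy--Schwarz after averaging over configurations), or supply the missing fluctuation and decoupling estimates, which are beyond the methods of this paper.
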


For the above theorem to hold,  it is enough to show
that the change of one long block (picked at random)
 induces  an expected increase
in the LCS.  For this we choose in $X$ one of the long blocks at
random and change it back into iid. We assume that all the long
blocks have equal probability to get picked and the string
obtained by changing one long block into iid
is denoted by $\tilde{X}=\tilde{X}_1\tilde{X}_2\cdots \tilde{X}_n$.
Let us describe $\tilde{X}$ a little bit more formally:
First recall that $X^*$ denotes ``the iid string $X$ before
the long blocks are introduced".
Let $N$ be the total number of long blocks in $X$, i.e.,
$$N:=\sum_{i=1}^mZ_i,$$
and let $i(j)$ be the index of the $j$-th long block,
i.e., if 
$$\sum_{s=1}^{i-1}Z_s=j-1,\quad \sum_{s=1}^iZ_s=j,$$
then $i(j):=i$.

Next, let $M$ be a random variable which is uniform on $\{1,2,\ldots,r\}$
when $N=r$:
$$\P(M=j\mid N=r)=\frac{1}{r},\quad \forall j\leq r, $$
and assume that conditionally on $N=r$, the variable
$M$ is independent of $X$, $X^*$ and $Y$.
The block we change has index $i(M)$, therefore
$$\P(\tilde{X}_s=X_s,\forall\; s\notin J_{i(M)})=1$$
and
$$\P(\tilde{X}_s=X^*_s,\forall\; s\in J_{i(M)})=1.$$
In other words, the strings $X$ and $\tilde{X}$ are the same
everywhere except on the interval $J_{i(M)}$ and
on that interval, $\tilde{X}$ is equal to the iid
sequence $X^*$.
We are now ready to formulate the result stating that in order to show 
that $\var LC_n=\Theta(n)$, it is
enough to prove that 
the randomly changed block typically has a positive biased effect
on the length of the LCS: 

\begin{theorem}
\label{equivalence}
Let $d_0\in\N$, and let
there exist two constants $c_1,c_2>0$ independent of $n$ 
such that
\begin{equation}
\label{bias}
\P\left(\E\left.\left(|LCS(\tilde{X}; Y)|-|LCS(X;Y)|
\right|X,Y\right)\geq
c_1d^\beta_0\right)\geq 1-e^{-c_2n},
\end{equation}
for all $n$ large enough.
Then, taking $d_0=d$, it follows that
$$\var LC_n=\Theta(n).$$
\end{theorem}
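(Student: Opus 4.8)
The plan is to reduce everything to the behaviour of the conditional mean
$$g(\nu):=\E[LC_n\mid N=\nu]$$
as a function of the number $N=\sum_{i=1}^m Z_i$ of long blocks, and to extract a linear lower bound on the variance from the slope of $g$. Since the upper bound $\var LC_n\le n$ is already available from \cite{Steele82,Steele86}, only the lower bound $\var LC_n\ge C_1 n$, with $C_1=C_1(d)>0$, has to be proved (the implied constants are allowed to depend on the fixed parameter $d$). The starting point is the law of total variance,
$$\var LC_n=\E\big[\var(LC_n\mid N)\big]+\var\big(\E[LC_n\mid N]\big)\ge \var\big(g(N)\big),$$
so it suffices to show that $g(N)$ alone fluctuates on the scale $\sqrt n$.

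First I would establish a distributional identity turning the hypothesis \eqref{bias} into a statement about $g$. Write $\Delta:=|LCS(\tilde X;Y)|-|LCS(X;Y)|$. Conditionally on $N=\nu$ the $\nu$ block-locations form a uniformly random $\nu$-subset of the $m$ slots, and de-blocking one uniformly chosen block (which is exactly how $\tilde X$ is built) produces a configuration with $\nu-1$ uniformly placed blocks; since $Y$ is untouched, $(\tilde X,Y)\mid\{N=\nu\}\stackrel{d}{=}(X,Y)\mid\{N=\nu-1\}$, whence $\E(\Delta\mid N=\nu)=g(\nu-1)-g(\nu)$. Second I would pass from the high-probability bias to this conditional difference. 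Writing $B$ for the event inside \eqref{bias} and using the deterministic bound $|\Delta|\le \l=d^\beta$ (changing a single block of length $\l$ alters the LCS by at most $\l$), one gets
$$g(\nu-1)-g(\nu)\ge c_1 d^\beta-(c_1+1)d^\beta\,\P(B^c\mid N=\nu).$$
For $\nu$ in the central range $I:=[pm-\sqrt m,\,pm+\sqrt m]$ one has $\P(N=\nu)\ge c/\sqrt m$, so $\P(B^c\mid N=\nu)\le e^{-c_2 n}/\P(N=\nu)\le e^{-c_2 n/2}$ for $n$ large, and hence $g(\nu-1)-g(\nu)\ge s:=c_1 d^\beta/2$ for every $\nu\in I$.

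Third I would convert this uniform slope bound into the variance lower bound. On $I$ the monotone function $g$ is $s$-expansive, i.e.\ $|g(a)-g(b)|\ge s|a-b|$ for $a,b\in I$. Taking an independent copy $N'$ of $N$ and using $\var\big(g(N)\big)=\tfrac12\E\big[(g(N)-g(N'))^2\big]\ge\tfrac{s^2}{2}\E\big[(N-N')^2\mathbbm{1}_{\{N,N'\in I\}}\big]$, together with $\P(N\in I)\ge c'>0$ and $\var(N\mid N\in I)=\Theta(m)$ (both following from the central limit behaviour of the binomial $N$, whose standard deviation $\sqrt{p(1-p)m}$ is comparable to the half-width $\sqrt m$ of $I$), yields
$$\var\big(g(N)\big)\ \ge\ C\,s^2\,m\ =\ C\,\frac{c_1^2}{4}\,d^{2\beta}\,m\ =\ C_1(d)\,n,$$
since $m=n/2d$ and $d$ is fixed. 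Combined with the displayed law-of-total-variance bound and the upper bound of \cite{Steele82,Steele86}, this gives $\var LC_n=\Theta(n)$.

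The step I expect to be delicate is the second one: the hypothesis \eqref{bias} is phrased conditionally on the whole pair $(X,Y)$ and only with overwhelming (not full) probability, whereas the argument needs a clean lower bound on the conditional increment $g(\nu-1)-g(\nu)$ for each $\nu$ in the bulk of the distribution of $N$. Controlling the exceptional event $B^c$ requires both the a priori bound $|\Delta|\le d^\beta$ and the fact that the exponential estimate $e^{-c_2 n}$ beats the polynomial lower bound $c/\sqrt m$ on $\P(N=\nu)$; one must also check that the bulk range $I$ carries enough mass and conditional variance for the final binomial estimate. The remaining ingredients — the exchangeability identity and the expansiveness inequality — are then essentially bookkeeping.
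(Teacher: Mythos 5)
Your proposal is correct, but it takes a genuinely different route from the paper, and in fact a more complete one. The paper also channels the variance through the block count $N$, but via a coupling rather than conditioning: it builds $X(m),X(m-1),\ldots$ by repeatedly de-blocking a uniformly chosen long block, sets $LC_n(\l):=|LCS(X(\l);Y)|$, observes (this is exactly your exchangeability identity) that $X(\l)$ has the law of $X$ given $N=\l$, hence $\var LC_n=\var LC_n(N)$, and then argues that by \eqref{bias} the \emph{random} path $\l\mapsto LC_n(\l)$ behaves like a biased random walk, delegating the extraction of a bound of the form $\var LC_n(N)\ge c^2\var N$ to ``techniques as in \cite{BM}''; that last step, the hardest one, is only sketched there. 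You instead apply the law of total variance to pass to the deterministic function $g(\nu)=\E[LC_n\mid N=\nu]$, identify its increments with the conditional bias, and obtain a uniform slope $c_1d^\beta/2$ on the bulk $I$ of the binomial because the exponential bound $e^{-c_2 n}$ in \eqref{bias} beats the local-CLT lower bound $\P(N=\nu)\ge c/\sqrt m$; your independent-copy estimate then yields $\var\bigl(g(N)\bigr)\ge C(d)\,n$. What this buys is a self-contained, elementary proof: averaging first turns the ``with overwhelming probability'' per-step bias into a genuine derivative of a deterministic function, so no random-walk machinery is needed, and discarding the nonnegative term $\E[\var(LC_n\mid N)]$ costs nothing for a lower bound. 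One caveat, which your argument shares with the paper's own lemmas rather than introduces: for fixed $d$ and $n\to\infty$ the iid part of $X$ contains, with probability tending to one, slots $J_i$ that are constant by chance although $Z_i=0$, so $N$ is \emph{not} $\sigma(X,Y)$-measurable, and your tower-property step $\E[\Delta\mid N=\nu]=\E\bigl[\E[\Delta\mid X,Y]\,\big|\,N=\nu\bigr]$ is not literally licit; it becomes licit, and your whole argument goes through verbatim, if the conditioning in \eqref{bias} is read as conditioning on $(X,Y,Z)$ (the strings together with the block locations), which is how the paper itself manipulates this conditional expectation in the proof of Lemma~\ref{lemmaQ}.
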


\begin{proof}
The idea of the proof is to represent $LC_n=|LCS(X;Y)|$ as a 
function $f$ of a binomial random variable $N$, with $f$ satisfying
locally a reverse Lipschitz condition. 
Note that if a function $f:\mathbb{R}\rightarrow
\mathbb{R}$ 
satisfies a reverse Lipschitz condition, i.e., if $|f(x)-f(y)|\ge c|x-y|$,
for all $x,y\in\bbr$ or merely $x,y\in \calr (T)$, where $\calr  (T)$ is 
the range of a random variable $T$ with finite variance, then 
\begin{equation}\label{finitevar}
\var f(T)=
\frac12\, \E (f(T)-f(\tilde T))^2
\ge 
\frac{c^2}2\, \E (T-\tilde T)^2= c^2\var T, 
\end{equation}
where $\tilde T$ is an independent copy of $T$.
Moreover, if the reverse Lipschitz condition is only satisfied locally,
i.e., if, say, $|T-\tilde T|\ge r$, then \eqref{finitevar} is complemented by:
\begin{align}\label{compfinvar}
\var f(T)&\ge \frac{c^2}2\E(T-\tilde T)^2\mathbbm{1}_{|T-\tilde T|\ge r}\\
&= c^2\var T-\frac{c^2}2 \E (T-\tilde T)^2\mathbbm{1}_{|T-\tilde T| < r}\nonumber\\
&\ge c^2\var T-\frac{c^2}2r^2.
\end{align}
Therefore, if
$N$ is a binomial random variable with parameters $m=n/2d$ and $p$,
\begin{equation}
\label{VAFf}
\var f(N)
\geq \frac{c^2np(1-p)}{2d}.
\end{equation}
This last inequality  gives the desired order
for the variance of  $f(N)$, i.e., $\var f(N)=\Theta(n)$,
and it remains to find a way to represent $LC_n$
as $f(N)$, where $f$ is a function which typically increases
linearly.

This is done as follows: let $X(\l)$ denote a string of length
$n$ whose distribution is the distribution
of   $X$ conditioned on the number of long blocks to be $\l$:
$$\mathcal{L}(X(\l))=\mathcal{L}(X\mid N=\l),$$
where $\call$ stands for the law of the corresponding random variables.
The strings $X(\l)$ are all taken independent of $Y$ and of $N$.
We first simulate $X(m)$. For this, $X(m)$ is
a string of length $n$ with long blocks in every interval
$J_i$, for $i=1,2,\ldots,m$ and iid outside those intervals.
 Hence $X(m)$ is the string ``with the maximum number of long blocks
inserted". Then, we obtain $X(m-1)$ by choosing
in $X(m)$ one long block uniformly at random
 and turning it into iid and 
proceeding by induction,
once $X(\l)$ is defined we obtain $X(\l-1)$ by choosing 
uniformly at random one long block
in $X(\l)$ and turning it into iid. 
(We consider only the artificially
inserted long blocks, and not blocks in the iid part which might be long
by chance.) Next, let 
$$LC_n(\l):=|LCS(X(\l);Y)|.$$
It is  easy to
see that, with this construction, $X(\l)$ has the same
distribution as $X$ conditional on $N=\l$ and therefore that
$X(N)$ has the same distribution as $X$. So 
$LC_n$ has the same distribution as $LC_n(N)$ and 
$$\var LC_n=\var LC_n(N).$$
Now take $f: \l\mapsto LC_n(\l)$.
Note that by \eqref{bias}, 
$\l\mapsto LC_n(\l)$ behaves like a biased random walk path
which insures that the function $LC_n(\cdot)$ tends to increase linearly.
Clearly, $\l\mapsto LC_n(\l)$ is typically not going to increase
at every step but rather on a  $\log n$ scale.
This is enough to get an inequality like \eqref{VAFf},
by extending techniques developed in \cite{BM}, \cite{B}, or \cite{VARTheta}.
\end{proof}

\section{Replacing a long block
by iid has a biased effect}
\label{gail}
Next, we show that changing a randomly chosen long
block into iid has the desired biased effect in the current
multi-long block setting. The results
 of \cite{amsaluhoudrematzi2} 
where only one long block
 and both strings have length $2d$. 

By \cite{amsaluhoudrematzi2}, the probability, in the 
one-long-block setting
not to linearly  increase in the length of the long block,
is extremely unlikely as soon as $d$ is not too small. 
With this result for one long block, it should not come as 
a surprise that with many long blocks, most of them
if changed into iid lead to an increase of the LCS.
To make this argument rigorous there are two problems 
to overcome:
\begin{enumerate}
\item Our result for one long block assumes that both
sequences $X$ and $Y$ have length exactly equal to $2d$.
An optimal alignment $\vec a$ of sequences of length $n$,
will however ``map" the 
pieces 
\begin{equation}
\label{pieces}
X_1X_2X_3\cdots X_{2d},X_{2d+1}X_{2d+2}\cdots X_{4d},
X_{4d+1}X_{4d+2}\cdots X_{6d},\dots,X_{2d(m-1)+1}\cdots X_n
\end{equation}
to pieces of $Y$ of various lengths. Our solution to this first problem is to
show that with high probability for an optimal alignment
most of the pieces of $X$ given in \eqref{pieces} get aligned with pieces
 of $Y$ ``not too different  in length to $2d$'' (the ``not
too different'' will be specified later). This is done in Lemma 
\ref{lemmaK}.
\item If $\vec a$ is an optimal (random) alignment and, say, it aligns
the piece 
\begin{equation}
\label{Xi}
X_{2id+1}X_{2id+2}\cdots X_{2(i+1)d},
\end{equation} with
$$Y_{r_i+1}Y_{r_i+2}\cdots Y_{r_{i+1}},$$ then the distribution
of $Y_{r_i+1}Y_{r_i+2}\cdots Y_{r_{i+1}}$ is no longer iid but 
rather complicated and poorly
understood. Our result for the one long block case assumes
however the $Y$-string to be iid. Our solution to this second problem
is to specify for
each   piece  \eqref{Xi} which 
part of $Y$ it gets aligned to in a non-random manner. 
Hence, specify reals:
$r_0=0<r_1<r_2<r_3<\cdots<r_m=n$, and since these are
non-random, the various aligned parts become independent.
 We can then  use exponential inequalities to control the number of 
parts for which changing their long block into iid
produces the desired increase. Now, for one such specification,
the resulting alignment will hardly be optimal. 
But, the optimal alignment will typically be found
in a collection of such alignments with non-random 
constraints. We then show that typically
for each single alignment in the  whole collection
the property holds. (That is the property of 
the biased effect of the random change.)  
This is done in Lemma~\ref{lemmaQ}
\end{enumerate}
So, what remains to be done is to explore
the two  problems just described. 
To start, let us  introduce these non-random constrained alignments  via
a numerical example:
Take $d=3$ and $n=18$, and consider
the three intervals:
\begin{equation}
\label{intervals}
[1,2d]=[1,6],\;\;[2d+1,4d]=[7,12],\;\;[4d+1,n]=[13,18].
\end{equation}
 We are now going to specify the intervals to which these
intervals should get aligned.
For example we could align the first with $[1,7]$, the second
with $[8,11]$ and finally the third with $[12,18]$. Within
those constraints, we align in such a way to get a maximum number
of aligned letter pairs.
Hence, in our current example, we align a maximum number
of letter pairs of $X_1X_2\cdots X_6$ and $Y_1Y_2\cdots Y_7$, then
 a maximal number of letter pairs of
$X_7X_8\cdots X_{12}$ and $Y_8\cdots Y_{11}$ and
finally of $X_{13}\cdots X_{18}$ with $Y_{12}\cdots Y_{18}$.
The maximum number of aligned letter pairs  under these constraints
is therefore equal to
\begin{align*}
|LCS(X_1X_2\cdots X_6;Y_1\cdots Y_7)|&+
|LCS(X_7X_8\cdots X_{12};Y_8\cdots Y_{11})|\\
& +
|LCS(X_{13}X_{14}\cdots X_{18};Y_{12}\cdots Y_{18})|.
\end{align*}
Note that the three terms in the sum 
are independent.
Of course the alignment
defined in this way
is not necessarily an alignment corresponding to a LCS.
Indeed, let $k=2$, $n=12$
and let the sequences
$x=101010111111$ and $y=001010011110$. 
Then the alignment which
aligns $101010$ with $00$ and $111111$ with
$1010011110$ is given by:
\begin{equation}
\label{align62}
\begin{array}{c|c|c|c|c|c| c|c|c|c|c| c|c|c|c|c| c|c|c|c}
x& &1&0&1&0&1&0& & &1& &1& & &1&1&1&1&
\\\hline
y& & &0& &0& & & & &1&0&1&0&0&1&1&1&1&0
\end{array}\,.
\end{equation}
In fact, it corresponds to two alignments:
first the alignment aligning $X_1X_2\cdots X_6$ with $Y_1Y_2$:
\begin{equation}
\label{align62part1}
\begin{array}{c|c|c|c| c|c}
1&0&1&0&1&0
\\\hline
 &0& &0& &  
\end{array}
\end{equation}
and second the alignment aligning $X_{7}X_{8}\cdots X_{12}$ 
with $Y_3Y_4\cdots Y_{12}$:
\begin{equation}
\label{align62part2}
\begin{array}{c|c|c|c|c| c|c|c|c|c}
1& &1& & &1&1&1&1&
\\\hline
1&0&1&0&0&1&1&1&1&0
\end{array}\,.
\end{equation}
The alignment \eqref{align62} is obtained by
``concatenating" the alignments \eqref{align62part1} 
and \eqref{align62part2}. The ``score" of the alignment
\eqref{align62} is the sum of the scores of the alignment
\eqref{align62part1} and \eqref{align62part2}. Here, the
alignment \eqref{align62part1} aligned two letter
pairs while \eqref{align62part2} aligned six. Hence, 
when $X=x$ and $Y=y$, the score of the alignment
\eqref{align62} is
\begin{equation*}
|LCS(x_1x_2\cdots x_6;y_1y_2)|+
|LCS(x_7x_8\cdots x_{12};y_3y_4\cdots y_{12})|=
2+6=8.
\end{equation*}

Now, the alignment corresponding to the LCS is
\begin{equation}
\label{optimalalign}
\begin{array}{c|c|c|c|c| c|c|c|c|c| c|c|c|c|c| c|c}
x& & &1&0&1&0&1&0& &1&1&1&1& &1&1
\\\hline   
y& &0& &0&1&0&1&0&0&1&1&1&1&0& & 
\end{array}\,,
\end{equation}
and $LC_n=9$. In the alignment \eqref{optimalalign},
$x_1x_2\cdots x_6$ gets aligned with
$y_1y_2\cdots y_6$. 

More generally, let $n=2dm$, and let
$0=r_0<r_1<r_2<\cdots< r_{m-1} < r_m= n$ be integers.
Then, we study
the best alignment under the following $m$ 
 constraints:
\begin{itemize}
\item[1)] $X_1X_2\cdots X_{2d}$ gets aligned with $Y_1Y_2\cdots Y_{r_1}$
\item[2)] $X_{2d+1}X_{2d+2}\cdots X_{4d}$ gets aligned with 
$Y_{r_1+1}Y_{r_1+2}\cdots Y_{r_2}$
\item[3)] $X_{4d+1}X_{4d+2}\cdots X_{6d}$ gets aligned with 
$Y_{r_2+1}Y_{r_1+2}\cdots Y_{r_3}$
\item[]$\qquad\vdots$
\item[m)] $X_{2d(m-1)+1}X_{2d(m-1)+2}\cdots X_{n}$ gets aligned with
$Y_{r_{m-1}+1}Y_{r_{m-1}+2}\cdots Y_{n}$.\end{itemize}
The score of the best alignment under the above $m$ constraints,
denoted by
$LC_n(\vec{r})=LC_n(r_0,r_1,\ldots,r_{m-1},r_m)$, is equal to:
\begin{align*}
LC_n(\vec{r})&=LC_n(r_0,r_1,\ldots,r_{m-1},r_m)\\
&:=
\sum^{m-1}_{i=0}
|LCS(X_{2di+1}X_{2di+2}\cdots X_{2d(i+1)};Y_{r_i+1}
Y_{r_i+2}\cdots Y_{r_{i+1}})|.
\end{align*}
Let now 
\begin{align}
\label{tildeLr1rm}
\widetilde{LC}_n(\vec{r})&=\widetilde{LC}_n(r_0,r_1,\ldots,r_{m-1},r_m)
\nonumber\\
&= \sum^{m-1}_{i=0}
|LCS(\tilde X_{2di+1}\tilde X_{2di+2}\cdots \tilde X_{2d(i+1)};
Y_{r_i+1} Y_{r_i+2}\cdots Y_{r_{i+1}})|,
\end{align}
denote the score when the sequence $X$ is replaced by 
the sequence $\tilde{X}$. (Recall that the sequence $\tilde{X}$
is obtained from $X$ by replacing a randomly chosen long block
by iid.)

Let $\mathcal{R}^n$ be the set of all the partitions
of the integer interval $[0,n]$ into $m$ pieces:
$$\mathcal{R}^n:=\left\{(r_0,r_1,\ldots,r_m)\in[0,n]^{m+1}
: r_0=0\leq r_1\leq r_2\leq \cdots \leq r_m=n    \right\},$$
let $\gamma_k^e$ be a constant independent of $d$ such that
$$\gamma_k^e<\gamma^*_k,$$
and let $0<q^e<1$ be the unique real, which exists by concavity,
such that
$$\gamma_k(q^e)=\gamma_k^e.$$
Let $\epsilon>0$ and let
$$\mathcal{R}^n(\epsilon)\subset \mathcal{R}^n,$$
be the subset of those element of $\mathcal{R}^n$ which
have more than a proportion $1-{\epsilon p}/{2}$ of the values
$r_i-r_{i-1}$ in the interval
\begin{equation}
\label{interval}
\left[2d\frac{1-q^e}{1+q^e},2d\frac{1+q^e}{1-q^e}\right].
\end{equation}
More precisely,
$(r_0,r_1,\ldots,r_m)\in \mathcal{R}^n(\ep)$ 
if and only if 
$${\rm Card} \left\{i\in\{1,\ldots,m\}:
r_i-r_{i-1}\notin
\left[2d\frac{1-q^e}{1+q^e},2d\frac{1+q^e}{1-q^e}\right]
\right\}\leq \frac{mp\epsilon}{2}.$$
With these notations, we then proceed to prove
that with high probability every optimal alignment is in 
$\mathcal{R}^n(\epsilon)$. 

At this stage the reader, might wonder about the significance of the interval
\eqref{interval}. The answer is found when we consider
two independent iid strings where one has length $2d$
and the other has any length not in the interval \eqref{interval}.
Then, the expected length of the LCS of two such strings,
is at most $\gamma_k^e$ times the average of the
lengths of the two strings. 
Indeed, recall that when one sequence has length $2d$ and the other
has length $2ds$, then by definition \eqref{gammaknp},
\begin{equation}
\label{paloalto}
\gamma_k(d(s+1),q_s)=
\frac{\E(|LCS(X_1^*X_2^*\cdots X_{2d}^*;Y_1Y_2\cdots Y_{2ds})|)}{d(1+s)}
\end{equation}
with 
$$q_s=\frac{s-1}{s+1}.$$
When $s \not\in [(1-q^e)/(1+q^e),
(1+q^e)/(1-q^e)]$ (which corresponds to taking $s2d$ outside
the interval given in \eqref{interval}), then $q_s$
falls outside the interval $[-q^e,q^e]$. In other words, the expression
on the right side of \eqref{paloalto}
is equal to $\gamma_k(n,q_s)$ with $n$ being the average length of the 
strings and a $q_s\notin [-q^e,q^e]$. By subadditivity,
the limit always exceeds the current value:
$$\gamma_k(n,q)\leq \gamma_k(q)$$
for all $q\in[-1,1]$ and all $n$. Then by symmetry around zero
and by concavity, if $q_s\notin[-q^e,q^e]$,
then 
$\gamma_k(q_s)$ is strictly smaller than the value of $\gamma_k$
at the boundary of $[-q^e,q^e]$, hence
$$\gamma_k(q_s)<\gamma_k(q^e)=\gamma^e_k<\gamma^*_k.$$

We can now use  this for an alignment $\vec a$ between $X$ and $Y$.
 Since, $\gamma_k^e<\gamma^*_k$,
we infer that
 if too many of the pieces $X_{2di+1}X_{2di+2}\cdots
X_{2d(i+1)}$  are to be matched by $\vec a$
 with
a piece of $Y$ having length outside the interval given in \eqref{interval},
then the score of the alignment $\vec a$ would,
with high probability, be below optimal. Hence,
$\vec a$ would typically not correspond to an LCS.
This argument is made rigorous in the proof of
Lemma~\ref{lemmaQ}.

Denote by 
$K^n(\epsilon)$ the event
that every optimal alignment is in $\mathcal{R}^n(\epsilon)$,
i.e., 
$$K^n(\epsilon):=\left\{\forall \vec{r}\in\mathcal{R}^n:
LC_n(\vec{r})=LC_n, \vec{r}\in \mathcal{R}^n
(\epsilon)\right\}.$$
Let
\begin{equation}
\label{Deltavecr}\Delta(\vec{r})=\Delta(r_0,r_1,\ldots,r_m):=
\widetilde{LC}_n(r_0,r_1,\ldots,r_{m-1},r_{m})-LC_n(r_0,r_1,\ldots,r_{m-1},r_{m}).
\end{equation}
After showing that $K^n(\epsilon)$ has high probability,
it is also shown that with high probability, every
``alignment of $\mathcal{R}^n(\epsilon)$'' has a strong
conditional increase. For this we need to define the event $M^n(\ep)$.

Let $\vec{r}$ be an alignment of 
$\mathcal{R}^n$. Let $M^n_\epsilon(\vec{r})$ be the event
that among the  integers $i=1,2,\ldots,m$, there are less than
$mp\epsilon/2$ of them for which
the length $r_i-r_{i-1}$ is in the interval \eqref{interval},
and that there is a long block 
but for which the LCS of $X_{2d(i-1)+1}X_{2d(i-1)+2}\cdots X_{2di}$
with $Y_{r_{i-1}+1}Y_{r_{i-1}+2}\cdots Y_{r_i}$
does not increase by at least $\kappa d^\beta$ when
replacing the long block by iid. 
Hence, $M^n_\epsilon(\vec{r})$ is the event
that the set
$$\left\{i\in\{1,2,\ldots,m\}: Z_i=1,r_i-r_{i-1}\in \left[
2d\frac{1-q^e}{1+q^e}, 2d\frac{1+q^e}{1-q^e}\right],
\Delta(\vec{r})_i<\kappa d^\beta\right\}$$
contains less than $mp\epsilon/2$ elements.
Here, 
\begin{align*}
\Delta(\vec{r})_i&:=|LCS(X^*_{2d(i-1)+1}\cdots X^*_{2di};
Y_{r_{i-1}+1}\cdots Y_{r_{i+1}})|\\
&\qquad\qquad\qquad- |
LCS(X_{2d(i-1)+1}\cdots X_{2di}; Y_{r_{i-1}+1}\cdots Y_{r_{i+1}})|.
\end{align*}

Let $M^n(\epsilon)$ be the event that
$M^n_\epsilon(\vec{r})$ holds for every
$\vec{r}\in \mathcal{R}^n$:
$$M^n(\epsilon):=\bigcap_{\vec{r}\in\mathcal{R}^n}M_\epsilon^n(\vec{r}).$$

We will also need an event to insure that there are
enough long blocks. For this,
let $O^n$ be the event  that there are at least
$mp/2$ long blocks, i.e.,
$$O^n=\left\{\sum_{i=1}^mZ_i\geq \frac{mp}2\right\}.$$
The events $K^n(\ep)$, $M^n(\epsilon)$ and $O^n$, together imply the desired
expected conditional increase due to the random change of the 
long block into iid. This is the content of the next lemma.

\begin{lemma}
\label{combinatorics}
On $K^n(\ep)\cap M^n(\ep)\cap O^n$,
$$ \E\left.\left(|LCS(\tilde{X};Y)|-|LCS(X;Y)|
\right|X,Y\right)\geq
d^\beta (\kappa(1-2\epsilon)
-2\epsilon) .$$
\end{lemma}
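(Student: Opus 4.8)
The plan is to unwind the definitions of the events $K^n(\epsilon)$ and $Q^n(\epsilon)$ and then patch together the two conditional estimates they supply. The quantity we must bound from below is the conditional expected increase $\E(|LCS(\tilde X;Y)|-|LCS(X;Y)|\mid X,Y)$, where the randomness remaining after conditioning on $X$ and $Y$ is only the choice $M$ of which long block gets reset to iid (together with the auxiliary randomness in $\tilde X$ on $J_{i(M)}$). The difficulty is that $|LCS(\tilde X;Y)|$ and $|LCS(X;Y)|$ are full (unconstrained) LCS scores, whereas the events $K^n(\epsilon)$ and $Q^n(\epsilon)$ are phrased in terms of the constrained scores $\widetilde{LC}_n(\vec r)$ and $LC_n(\vec r)$ and the gap $\Delta(\vec r)$ defined in \eqref{Deltavecr}. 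So the first step is a sandwiching argument that converts between the two.

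First I would fix a realization of $(X,Y)$ in $Q^n(\epsilon)\cap K^n(\epsilon)$ and choose an optimal $\vec r^{\,*}\in\mathcal R^n$ with $LC_n(\vec r^{\,*})=LC_n=|LCS(X;Y)|$. By the definition of $K^n(\epsilon)$ this optimizing partition automatically lies in $\mathcal R^n(\epsilon)$, so it is a partition to which the $Q^n(\vec r)$ bound applies. The key two inequalities are then: (i) $|LCS(X;Y)|=LC_n(\vec r^{\,*})$, since constraining along the optimal partition loses nothing; and (ii) $|LCS(\tilde X;Y)|\ge \widetilde{LC}_n(\vec r^{\,*})$, because the constrained score along any fixed partition is a lower bound for the true (unconstrained) LCS of $\tilde X$ and $Y$. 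Subtracting, and using the definition $\Delta(\vec r^{\,*})=\widetilde{LC}_n(\vec r^{\,*})-LC_n(\vec r^{\,*})$, gives the pathwise bound
$$
|LCS(\tilde X;Y)|-|LCS(X;Y)|\ \ge\ \Delta(\vec r^{\,*}).
$$
Because this holds for the specific optimizing $\vec r^{\,*}$, which is itself a (possibly $(X,Y)$-measurable) random element of $\mathcal R^n(\epsilon)$, one must be slightly careful taking conditional expectations: the bound $\E(\Delta(\vec r)\mid X,Y)\ge d^\beta(\kappa(1-2\epsilon)-2\epsilon)$ is guaranteed by $Q^n(\epsilon)$ simultaneously for every $\vec r\in\mathcal R^n(\epsilon)$, which is exactly why $Q^n(\epsilon)$ was defined as the intersection over all such $\vec r$. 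This uniformity lets me take the conditional expectation of the displayed inequality and keep the bound even though $\vec r^{\,*}$ depends on the conditioning data.

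The main obstacle, and the reason the stated conclusion carries the extra $-2\epsilon$ loss, is that step (i) is not exactly an equality once one accounts for the rounding of the pieces $r_i-r_{i-1}$ to integers and for the fact that the optimal unconstrained alignment need not split $X$ exactly at the multiples of $2d$ that define the constrained problem. In other words, the honest statement is $|LCS(X;Y)|\le LC_n(\vec r^{\,*})+(\text{small slack})$ and $|LCS(\tilde X;Y)|\ge \widetilde{LC}_n(\vec r^{\,*})-(\text{small slack})$, where each slack term is an $O(\epsilon)$ (in fact at most a $2\epsilon$) boundary correction coming from the at most $2mp\epsilon$ pieces whose lengths fall outside the interval \eqref{interval}. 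I would track these corrections explicitly, absorb them into the additive $-2\epsilon$, and then conclude
$$
\E\left(|LCS(\tilde X;Y)|-|LCS(X;Y)|\mid X,Y\right)\ \ge\ d^\beta(\kappa(1-2\epsilon)-2\epsilon)-2\epsilon,
$$
which is the claim. The genuinely delicate point to get right is bookkeeping the direction of each inequality—constraining can only decrease the score, so it gives an upper bound for $LC_n$ and, applied to $\tilde X$, a lower bound for $|LCS(\tilde X;Y)|$—so that the two slacks both push in the favorable direction and the uniform $Q^n(\epsilon)$ estimate survives the passage from the constrained gap $\Delta(\vec r^{\,*})$ to the true LCS difference.
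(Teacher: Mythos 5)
Your proposal is correct and follows essentially the same route as the paper's own proof: fix a partition $\vec r^{\,*}$ achieving $LC_n$, use $K^n(\epsilon)$ to place it in $\mathcal{R}^n(\epsilon)$, bound the true difference below by $\Delta(\vec r^{\,*})$ via the sandwich $|LCS(X;Y)|=LC_n(\vec r^{\,*})$ and $|LCS(\tilde X;Y)|\geq \widetilde{LC}_n(\vec r^{\,*})$, and invoke the fact that $Q^n(\epsilon)$ gives the bound uniformly over $\mathcal{R}^n(\epsilon)$ so that the $(X,Y)$-measurability of $\vec r^{\,*}$ is harmless. The only divergence is your final paragraph, which chases a phantom issue: no rounding or boundary correction is needed, because any optimal alignment induces a partition at which the constrained score equals $LC_n$ exactly (matched pairs of a monotone alignment can always be split at integer boundaries between the $X$-blocks), so the argument already yields the stronger bound $d^\beta(\kappa(1-2\epsilon)-2\epsilon)$ -- exactly what the paper's proof displays -- and the extra $-2\epsilon$ in the lemma statement is mere slack, not a loss that must be accounted for.
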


\begin{proof} Let $\vec{a}$ be an optimal alignment of $X$ and $Y$.
Then, $\vec{a}$ can be viewed as a vector
$$\vec{a}=(a_0,a_1,\ldots,a_m)$$
with $a_0=0<a_1<a_2<\cdots<a_m=n$,
so 
\begin{equation}
\label{pieceX}
X_{2(d-1)i-1}\cdots X_{2di}
\end{equation} 
is aligned in an optimal way with
\begin{equation}
\label{pieceY}
Y_{a_{i-1}+1}\cdots Y_{a_i},
\end{equation} 
for $i=1,2,\dots, m$.
Hence, 
\begin{align*}
&|LCS(X_1\cdots X_n;Y_1\cdots Y_n)|=\\
&\quad =\sum_{i=1}^m |LCS(X_{2(d-1)i-1}\cdots X_{2di};Y_{a_{i-1}+1}\cdots Y_{a_i})|
\end{align*}
Now each of the pieces of string \eqref{pieceX} for $i=1,2,\ldots,m$
contains at most one long block.

If $K^n(\epsilon)$ holds, then $\vec{a}$ is an alignment
in the set $\mathcal{R}^n(\epsilon)$. Hence,
at most $\epsilon p m/2$ of the strings \eqref{pieceX} with $i=1,2,\ldots,m$
do not get aligned with a string \eqref{pieceY} of length belonging to 
\eqref{interval}. Since, by $O^n$ there are at least $pm/2$
long blocks, we get that the probability to chose a long block
inside a string \eqref{pieceX} for which the corresponding
\eqref{pieceY} has length outside \eqref{interval} is at most $\epsilon$.
Similarly, when $O^n$ and $M^n(\epsilon)$ both hold, then
the probability that the randomly chosen long block
is inside a string \eqref{pieceX} which when replaced by iid
does not lead to an increase of LCS of at least $d^\beta\kappa$
is no more than $\epsilon$. In other words,
with $K^n(\epsilon)$, $M^n(\epsilon)$ and $O^n$ all holding,
  the probability that the  chosen  long block increases
the alignment score of $\vec{a}$ by at least $d^\beta\kappa$,
is at least $1-2\epsilon$. 
Therefore,
\begin{equation}
\label{heini}
 \E\left.\left(|LCS(\tilde{X};Y)|-|LCS(X;Y)|
\right|X,Y\right)\geq \E(\Delta(\vec{a})\mid X,Y)\geq
d^\beta (\kappa(1-2\epsilon)
-2\epsilon ).
\end{equation}~\end{proof}

To be of any use, this increase needs to be strictly positive.
We will see that holding $\kappa>0$, fixed,
we can take $\epsilon>0$ as small as we want and the events $K^n(\ep)$
and $M^n(\epsilon)$ will
still have almost full probability, as long as $d$ is taken
large enough but fixed. 

The bias
\eqref{heini}, holds when $K^n(\epsilon)$, $O^n$
and $M^n(\epsilon)$ all hold, therefore
\begin{align}
\label{Pkappa}
&\P\left(\E\left.\left(|LCS(\tilde{X};Y)|-|LCS(X;Y)|
\right|X,Y\right)<
d^\beta (\kappa(1-2\epsilon)-2\epsilon\right)\nonumber\\
&\qquad\qquad\qquad\qquad \leq \P((K^n)^c(\epsilon))+\P((O^n)^c)+\P((M^n)^c(\epsilon)).
\end{align}
The purpose of
the next three lemmas is to show that the events
$\P((K^n)^c(\epsilon))$, $\P((O^n)^c)$ and $\P((M^n)^c(\epsilon))$ 
hold with small probability. 

\begin{lemma}
\label{lemmaK}
Let $k\in\mathbb{N}$, $k\ge 2$ and let $\gamma^e_k<\gamma^*_k$. 
Let $\epsilon>0$.  Let $0<p<1$. 
 Let $d$ be such that $(1+\ln 2d)/{2d}\le
(\gamma^*_k-\gamma^e_k)^2p^2\ep^2/{32}$. Then,
$$\P((K^n)^c(\epsilon))\leq \exp\left(
-\frac{n(\gamma^*_k-\gamma_k^e)^2p^2\ep^2}{32}
\right),$$
for all $n=2dm$, $m\in \mathbb{N}$.
\end{lemma}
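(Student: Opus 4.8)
The plan is to turn $(K^n)^c(\epsilon)$ into a union over the (exponentially many) partitions $\vec r$ and to beat that union with a concentration bound whose exponent is linear in $n$. Write $\ell_i:=r_i-r_{i-1}$ and let $\vec r^{\,0}$ be the diagonal partition, $r_i^0=2di$, for which every $\ell_i=2d$; since $q^e\in(0,1)$ we have $\frac{1-q^e}{1+q^e}<1<\frac{1+q^e}{1-q^e}$, so $2d$ lies in \eqref{interval} and $\vec r^{\,0}\in\mathcal{R}^n(\epsilon)$. Because $LC_n(\vec r)\le LC_n$ for every $\vec r$, while on $(K^n)^c(\epsilon)$ some $\vec r\notin\mathcal{R}^n(\epsilon)$ satisfies $LC_n(\vec r)=LC_n\ge LC_n(\vec r^{\,0})$, one has
$$(K^n)^c(\epsilon)\subseteq\bigcup_{\vec r\notin\mathcal{R}^n(\epsilon)}\bigl\{LC_n(\vec r)\ge LC_n(\vec r^{\,0})\bigr\}.$$
Hence it suffices to bound, for each fixed ``bad'' $\vec r$, the probability that its constrained score beats the diagonal one, and then to union-bound.

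First I would strip the long blocks out of the fluctuation analysis. Let $LC_n^*(\vec r)$ be the same score computed from the purely iid string $X^*$; since $X$ and $X^*$ differ in at most $N\l\le md^\beta$ places, $|LC_n(\vec r)-LC_n^*(\vec r)|\le md^\beta$ for all $\vec r$, so the event above forces $D(\vec r):=LC_n^*(\vec r)-LC_n^*(\vec r^{\,0})\ge-2md^\beta$. Now every piece is iid, so the superadditive upper bound behind \eqref{iidseq}, namely $\E|LCS(\text{iid }a;\text{iid }b)|\le\gamma_k(q)\frac{a+b}{2}$ with $q=(b-a)/(a+b)$, applies to each term of $LC_n^*(\vec r)=\sum_i|LCS(X^*\text{-block }i;Y\text{-segment }i)|$. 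A gap $\ell_i\notin$\eqref{interval} corresponds exactly to $|q_i|>q^e$ (with $q_i=(\ell_i-2d)/(\ell_i+2d)$), whence $\gamma_k(q_i)\le\gamma_k(q^e)=\gamma_k^e$ by concavity and symmetry. As $\sum_i\frac{2d+\ell_i}{2}=n$ and each bad piece contributes at least $d$, a bad $\vec r$ (more than $2mp\epsilon$ bad pieces) obeys $\E LC_n^*(\vec r)\le\gamma^*_kn-(\gamma^*_k-\gamma_k^e)np\epsilon$, while \eqref{iidseq} applied to the $m$ length-$2d$ diagonal pieces gives $\E LC_n^*(\vec r^{\,0})\ge\gamma^*_kn-Cm\sqrt{2d\log 2d}$. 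Thus $\E D(\vec r)\le-(\gamma^*_k-\gamma_k^e)np\epsilon+Cm\sqrt{2d\log2d}$, and since $Cm\sqrt{2d\log2d}$ and $2md^\beta$ are $o(dm)$ whereas $(\gamma^*_k-\gamma_k^e)np\epsilon=\Theta(dm)$, for all fixed $d\ge d_1$ the gap between the threshold $-2md^\beta$ and $\E D(\vec r)$ is at least $\tfrac12(\gamma^*_k-\gamma^e_k)np\epsilon$.

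For the deviation I would apply the bounded-differences (McDiarmid) inequality to $D(\vec r)$ regarded as a function of the iid letters $X^*_1,\dots,X^*_n,Y_1,\dots,Y_n$: altering one letter changes $D(\vec r)$ by at most $2$, so $\sum_jc_j^2\le8n$ and $D(\vec r)$ is sub-Gaussian at scale $\sqrt n$ about its mean. Combining with the mean gap of the previous step,
$$\P\bigl(LC_n(\vec r)\ge LC_n(\vec r^{\,0})\bigr)\le\P\!\left(D(\vec r)\ge\E D(\vec r)+\tfrac12(\gamma^*_k-\gamma^e_k)np\epsilon\right)\le\exp\!\left(-\frac{(\gamma^*_k-\gamma^e_k)^2p^2\epsilon^2\,n}{16}\right)=\exp(-2c_0n),$$
where $c_0:=(\gamma^*_k-\gamma^e_k)^2p^2\epsilon^2/32$.

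Finally I would count partitions: $\#\mathcal{R}^n=\binom{n+m-1}{m-1}\le\exp\!\bigl(\tfrac{n}{2d}(1+\ln 2d)\bigr)$, which by the hypothesis $(1+\ln 2d)/2d\le c_0$ is at most $\exp(c_0n)$; multiplying by the per-$\vec r$ bound yields $\P((K^n)^c(\epsilon))\le\exp(c_0n)\exp(-2c_0n)=\exp(-c_0n)$, as desired. The crux—and the reason the hypothesis on $d$ has exactly this shape—is the entropy-versus-concentration balance: there are $\exp(\Theta(\tfrac{n\ln 2d}{2d}))$ partitions, so each must fail with probability $\exp(-2c_0n)$, i.e.\ with a linear-in-$n$ exponent, which in turn demands that the constrained score fluctuate only at scale $\sqrt n$. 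This is precisely why the long blocks must be removed first: had their random fill-symbols been left inside the bounded-differences count, each would force a difference of order $d^\beta$, inflating $\sum_jc_j^2$ to order $nd^{2\beta-1}$ and destroying the balance once $\beta>1/2$ and $d$ is large. I therefore expect the decoupling step, together with verifying that the $\Theta(dm)$ mean gap really dominates the $o(dm)$ corrections uniformly in $\vec r$, to be the main technical obstacle.
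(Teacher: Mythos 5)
Your proposal is correct and has the same architecture as the paper's proof: a union bound over bad partitions, the deterministic reduction from $X$ to the iid string $X^*$ at cost $2md^\beta$, an expectation deficit of order $(\gamma^*_k-\gamma^e_k)np\ep$ for every $\vec r\notin\mathcal{R}^n(\ep)$ obtained exactly as in \eqref{0.5} from concavity and symmetry of $\gamma_k$ together with $\gamma_k(t,q)\le\gamma_k(q)$, a bounded-differences bound with increments $2$ in the $2n$ iid letters, and the entropy count $e^{m(1+\ln 2d)}$ absorbed by the stated hypothesis on $d$. The one genuine structural difference is your choice of reference score: the paper bounds $\P(LC_n(\vec r)-LC_n\ge 0)$, comparing against the \emph{unconstrained} optimum and lower-bounding $\E LC^*_n$ by applying \eqref{iidseq} to the full length-$n$ pair (its inequality \eqref{0.3}), whereas you compare against the \emph{diagonal} partition $\vec r^{\,0}$ and apply \eqref{iidseq} blockwise to the $m$ length-$2d$ pieces. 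Your variant buys transparent uniformity in $m$: the error term $Cm\sqrt{2d\log 2d}$ is $o(dm)$ for every $m$ once $d$ is large, while the paper must assert (rather tersely, after \eqref{0.3}) that $C\sqrt{n\log n}\le \tfrac n4(\gamma^*_k-\gamma^e_k)p\ep$ for all $n=2dm$ under its conditions on $d$; the price is concentrating the slightly more elaborate statistic $LC^*_n(\vec r)-LC^*_n(\vec r^{\,0})$, whose bounded differences are nevertheless still $2$ per letter. Two caveats, both shared with the paper rather than specific to you: (i) like the paper's condition \eqref{aftermdbeta}, your requirement $d\ge d_1$ making the $\Theta(dm)$ mean gap dominate the $o(dm)$ corrections is not literally contained in the lemma's displayed hypothesis on $d$; (ii) your counting estimate $\binom{n+m-1}{m-1}\le e^{m(1+\ln 2d)}$ is slightly optimistic for large $m$ (the correct exponent is $m(1+\ln(2d+1))$ up to lower-order terms, costing roughly an extra $e^{n/4d^2}$), but this is the same constant-level slack as the paper's count $\binom{n}{m}\le (ne/m)^m$ and affects only the constant in the exponent, not the exponential-in-$n$ conclusion.
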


\begin{proof}
Let $\vec{r}=(r_0,r_1,r_2,\ldots,r_m)$ be an alignment in 
$\mathcal{R}^n$.
Let $LC_n^*(\vec{r})$ denote the alignment score when
aligning $X^*$ with $Y$ according to $\vec{r}$:
\begin{equation*}
LC_n^*(\vec{r}):=
\sum^{m-1}_{i=0}
|LCS(X^*_{2di+1}X^*_{2di+2}\cdots X^*_{2d(i+1)};Y_{r_i+1}
Y_{r_i+2}\cdots Y_{r_{i+1}})|,
\end{equation*}
and let $LC^*_n$ denote the score of the LCS when aligning $X^*$
with $Y$:
$$LC^*_n:=|LCS(X_1^*X_2^*\cdots X_n^*; Y_1Y_2\cdots Y_n)|.$$
When the alignment $\vec{r}$ does not belong
to $\mathcal{R}^n(\epsilon)$, then for $n$ large enough, and as explained
at the end of the present proof,
\begin{equation}
\label{differenceexpectation}
\E (LC^*_n(\vec{r})-LC^*_n)\leq -\frac34(\gamma^*_k-\gamma_k^e)p\epsilon n.
\end{equation}
Next, recall that $LC_n(\vec{r})$ denotes the alignment score
when we align  $X$ with $Y$ according to $\vec{r}$,
while $LC_n:=|LCS(X;Y)|$.
The difference between $X^*$ and $X$ is at most
$m$ long block of length $d^\beta$. Hence the absolute difference between
$LC^*_n(\vec{r})$ and $LC_n(\vec{r})$ is at most
$md^\beta$ and so is the absolute difference
$|LC^*_n-LC_n|$. Therefore,
\begin{equation}
\label{mdbeta}
\left|LC^*_n(\vec{r})-LC^*_n-(LC_n(\vec{r})-LC_n)\right|
\leq 2d^\beta m.
\end{equation}
But if
\begin{equation}
\label{aaa}LC_n(\vec{r})-LC_n\geq 0,
\end{equation}
is to hold, then, by \eqref{mdbeta}, necessarily
\begin{equation}\label{eqinsert}
LC^*_n(\vec{r})-LC^*_n\geq -2d^\beta m.\end{equation}
Next, recall that $\beta<1$, and choose $d$ large enough
so that
\begin{equation}\label{aftermdbeta}
4d^\beta\le (\gamma^*_k-\gamma_k^e)p\epsilon d.\end{equation}
 Combining \eqref{eqinsert} with 
\eqref{differenceexpectation} and \eqref{aftermdbeta}
leads to:
\begin{equation}
\label{bbb}
LC_n^*(\vec{r})-LC_n^*-\E(LC^*_n(\vec{r})-LC^*_n)\geq 
\frac12(\gamma^*_k-\gamma_k^e)p\epsilon n,
\end{equation}
and therefore
\begin{equation}
\P(LC_n(\vec{r})-LC_n\geq 0)\leq
\P\left(LC_n^*(\vec{r})-LC_n^*-\E(LC^*_n(\vec{r})-LC^*_n)\geq 
\frac12(\gamma^*_k-\gamma_k^e)p\epsilon n\right).
\end{equation}
Since
 $LC^*_n(\vec{r})-LC^*_n$
depends on the iid random variables $X^*_1,X^*_2,\ldots,X^*_n$
and $Y_1,Y_2,\ldots, Y_n$, and  changes by at most $2$
when one of these variables changes,
Hoeffding's exponential martingale inequality (e.g., see 
\cite[Chap.~12]{GS}) ensures that:  
\begin{equation}
\label{boundexp}
\P\left(LC_n^*(\vec{r})-LC_n^*-\E(LC^*_n(\vec{r})-LC^*_n)\geq
\frac12(\gamma^*_k-\gamma_k^e)p\epsilon n\right)
\leq \exp\left(-\frac{n(\gamma^*_k-\gamma_k^e)^2p^2\epsilon^2}{16}\right).
\end{equation}
Now for $(K^n)^c(\epsilon)$ to hold, we need at 
least one  optimal alignment $\vec{r}\in\mathcal{R}^n$
which is not in $\mathcal{R}^n(\epsilon)$. But, if
$\vec{r}$ is optimal then it corresponds
to a LCS and thus $LC_n(\vec{r})-LC_n\geq 0$. Therefore,
$$(K^n)^c(\epsilon)
=\bigcup_{\vec{r}\notin\mathcal{R}^n(\epsilon)}
\left\{LC_n(\vec{r})-LC_n\geq 0\right\},
$$
so that
$$\P((K^n)^c(\epsilon))\leq
\sum_{\vec{r}\notin\mathcal{R}^n(\epsilon)}\P(LC_n(\vec{r})-LC_n\geq 0  )
.$$ 
This last sum contains at most
$\binom{n}{m}$ terms and so with the help of 
\eqref{boundexp}, 
\begin{align}
\label{Kncep}
\P((K^n)^c(\epsilon))&\leq \binom{n}{m}\exp
\left(-\frac{n(\gamma^*_k-\gamma_k^e)^2
\epsilon^2}{16}\right) \nonumber\\
&\le\left(\frac{ne}m\right)^m\exp
\left(-\frac{n(\gamma^*_k-\gamma^e_k)^2\epsilon^2}{16}\right)\nonumber\\
&\leq \exp
\left(-\frac{n((\gamma^*_k-\gamma_k^e)^2p^2
\epsilon^2)}{16}+\frac{n(1+\ln 2d)}{2d}\right),
\end{align}
since $n=2dm$.
Our choice of $d$, then leads to
$$
\P((K^n)^c(\epsilon))\leq \exp
\left(-\frac{n(\gamma^*_k-\gamma_k^e)^2p^2
\epsilon^2}{32}\right)
.
$$
 
Let us next detail
how the inequality \eqref{differenceexpectation} is obtained.
This inequality only holds for alignments $\vec{r}$ which are
not in $ \mathcal{R}^n(\epsilon)$.
Hence, assume now that $\vec{r}=(r_0,r_1,\ldots,r_m)\in\mathcal{R}^n$, but
that $\vec{r}\notin \mathcal{R}^n(\epsilon)$.
Then, there
 are at least $2mp\epsilon$ of the substrings
\begin{equation}
\label{Ystringi}
Y_{r_i+1}Y_{r_i+2}\cdots Y_{r_{i+1}},
\end{equation}
with length outside the interval \eqref{interval}.
For such a string, then as explained next,
 the expected value  with the corresponding
piece of $X^*$ is at most
$\gamma_k^e$ times half the number of symbols involved.
Thus, if the length of \eqref{Ystringi} is not in \eqref{interval},
then
\begin{equation}
\label{smaller0}
\E |LCS(X_{2di+1}^*X_{2di+2}^*\cdots X_{2d(i+1)}^*; 
Y_{r_i+1}Y_{r_i+2}\cdots Y_{r_{i+1}})|
\leq \frac{\gamma_k^e}2(2d+r_{i+1}-r_i )
\end{equation}
(To obtain this last inequality, use the fact that 
the expectation on the left-hand side of
\eqref{smaller0} is by definition of $\gamma_k(\cdot,\cdot)$ (see
\eqref{gammaknp})
equal to $\gamma_k(j,q^*)/2$ times the number of symbols $j$ involved,
where $q^*=(r_{i+1}-r_i-2d)/j$,
while $j=2d+r_{i+1}-r_{i}$. Moreover, the function
$t\rightarrow\gamma_k(t,q^*)$ is subadditive so that
\begin{equation}
\label{little}
\gamma_k(t,q^*)\leq \gamma_k(q^*),
\end{equation}
for all $t\in\mathbb{N}$.
When $r_{i+1}-r_{i}$ is outside the interval
\eqref{interval}, then $q^*$ is outside of $[-q^e,q^e]$.
But since the function $\gamma_k$  is symmetric around the origin
and concave,
\begin{equation}
\label{little2}
\gamma_k(q^*)\leq\gamma_k(q^e)=\gamma_k^e.
\end{equation}
Combining \eqref{little} and \eqref{little2}, leads to
$$\gamma_k(j,q^*)\leq \gamma_k^e,$$
which in turn leads to
\eqref{smaller0}.)

We can now apply a very similar argument for those $i$'s,
for which $r_{i+1}-r_{i}$ is in \eqref{interval}. For those
$i$'s, instead of \eqref{smaller0}, we find:
\begin{equation}
\label{smaller2}
\E\left|LCS(X_{2di+1}^*X_{2di+2}^*\cdots X_{2d(i+1)}^*; 
Y_{r_i+1}Y_{r_i+2}\cdots Y_{r_{i+1}})\right|
\leq 
\frac{\gamma^*_k}2(2d+r_{i+1}-r_i).\end{equation}
 Combining \eqref{smaller2} and \eqref{smaller0},
we have:
\begin{align}\label{0.5}
\E LC_n(\vec{r}) &= \sum^{m-1}_{i=0}
\E \left|LCS (X^*_{2di+1}X^*_{2di+2} \cdots X^*_{2d(i+1)};
Y_{r_{i}+1} Y_{r_{i}+2}\cdots Y_{r_{i+1}})\right|\nonumber\\
&\le \frac{\gamma^e_k}2
\sum^{m-1}_{\stackrel{i=0}{\vec r\notin\calr_n(\ep)}}
(2d+r_{i+1}-r_i)+\frac{\gamma^*_k}2
\sum^{m-1}_{\stackrel{i=0}{\vec r\notin\calr_n(\ep)}}
(2d+r_{i+1}-r_i)\nonumber\\
&=\frac{\gamma^*_k}2\sum^{m-1}_{i=0} (2d+r_{i+1}-r_i)+
\left(\frac{\gamma^e_k-\gamma^*_k}2\right)
\sum^{m-1}_{\stackrel{i=0}{\vec r\notin\calr_n(\ep)}}
(2d+r_{i+1}-r_i)\nonumber\\
&\le\frac{\gamma^*_k}2 (2dm+n)+
\left(\frac{\gamma^e_k-\gamma^*_k}2\right)
\sum^{m-1}_{\stackrel{i=0}{\vec r\notin\calr_n(\ep)}} 2d\nonumber\\
&\le\gamma^*_kn+
\left(\frac{\gamma^e_k-\gamma^*_k}2\right)2d\,2mp\ep\nonumber\\
&=\gamma^*_kn-(\gamma^*_k-\gamma^e_k)np\ep.
\end{align}
Next, as $n\rightarrow\infty$, 
$\E LC^*_n/n \to \gamma^*_k$. So, taking $n$ large enough,
\begin{equation}
\label{0.3}
\E LC^*_n \geq \gamma^*_k n-\frac n4(\gamma^*_k-\gamma_k^e)p\epsilon.
\end{equation} 
In fact, our conditions on $d$, imply that \eqref{0.3}
is, by  \eqref{iidseq}, satisfied for all $n=2dm$.
Combining \eqref{0.5}
and \eqref{0.3} gives
the desired inequality \eqref{differenceexpectation}.
\end{proof}
 
\begin{lemma}
\label{lemmaO} Let $m\in \N$, let $0<p<1$, then
$$\P(O^{n})\geq 1-\exp\left(-\frac{np^2}{4d}\right), $$ 
for all $n=2dm$, $d\in\N$.
\end{lemma}

\begin{proof}
The total number 
of long blocks $\sum_{i=1}^{m}Z_i$ is a binomial random variable
with parameters $m=n/2d$ and $p$. 
Thus,
\begin{equation*}
1-\P(O^n)=\P\left(\sum_{i=1}^{m}Z_i\leq \frac{mp}2\right)
=\P\left(\sum_{i=1}^mZ_i-\E\sum_{i=1}^mZ_i\leq -\frac{mp}2\right)
\leq \exp\left(-\frac{mp^2}2\right),
\end{equation*}
by Hoeffding's inequality.
\end{proof}

\begin{lemma}
\label{lemmaQ} Let $\ep >0$. Let $0<p<1$. 
Let $\frac12<\alpha <\beta <1$. Then, for $d$ large enough, 
$$\P((M^{n}(\epsilon))^c)\leq  \exp\left(-C_Md^{2\alpha-2\epsilon p}\right) 
,$$
for all $n=2dm$ and where $C_M>0$ is a constant independent of $d$, $n$,
$\epsilon$ and $p$.
\end{lemma}

\begin{proof} It is already shown in \cite{amsaluhoudrematzi2},
 in the one long block
situation, that changing the long block into iid tends to increase
the LCS-score. (See 
 Theorems~3.1 and 3.2 in \cite{amsaluhoudrematzi2}
and the events 
$H^d$ and $K^d$ there.) Now, these results are proved when the two 
strings have length exactly equal to $2d$. However, 
the same order of magnitude for the corresponding probability
holds true, if the sequence $Y$ has length in
the interval \eqref{interval} instead of exactly equal to $2d$.
(This is proved in Theorem~\ref{oneblocktheorem} of the Appendix.)
Let now $\vec{r}\in \mathcal{R}$. So, $\vec{r}=(r_0,r_1,\ldots,r_m)$ corresponds
to a specification of which parts of $Y=Y_1Y_2\cdots Y_n$
the different pieces
\begin{equation}
\label{lember}
X_{2d(i-1)+1}\cdots X_{2di},
\end{equation}
get aligned to. That is, 
for each $i=1,2,\ldots,m$, the string \eqref{lember}
gets aligned with 
$$Y_{r_{i-1}+1}Y_{r_{i-1}+1}\cdots Y_{r_i},$$
and the score thus obtained is denoted by 
$LC_n(\vec{r})$. Now,  by Theorem~\ref{oneblocktheorem}, for a single $i$ to be such
that $r_{i}-r_{i-1}$ is in the interval \eqref{interval},
but also such that replacing the long block by iid does not give
the increase of $\kappa d^\beta$, has a probability 
upper-bounded $\exp(-cd^{2\alpha-1})$, for some constant $c>0$. 
So, to have $\epsilon p m/2$
such intervals where the expected increase does not take
place would have a probability of less
than
$$ 
\exp\left(- cd^{2\alpha-1}\frac{\epsilon p m}2\right),$$
provided we specify which of the intervals fail to show that increase.
More precisely, for any integer subset $I\subset\{1,2,3,\ldots,m\}$,
counting $\epsilon p m/2$ elements in it,
we have
\begin{align}
\label{merdemerde}
&\P(\forall i \in I, 
|LCS(X^*_{2d(i-1)+1}\cdots X^*_{2di};Y_{r_{i-1}+1}\cdots Y_{r_i})\nonumber\\
&\quad-LCS(X_{2d(i-1)+1}\cdots X_{2di};Y_{r_{i-1}+1}\cdots Y_{r_i})|
< \kappa d^\beta\mid Z_1=1)\leq \exp\left(- cd^{2\alpha-1}\frac{\epsilon p m}2\right).
\end{align}
The above inequality is for a non-random prespecified
set $I$. There are at most $\binom{m}{\frac{\epsilon p m}2}$ such sets. 
So, the event that there
exists a set $I\subset\{1,2,3,\ldots,m\}$ counting $\epsilon pm/2$
elements so that for each $i\in I$, the increase is not there,
has probability upper-bounded by multiplying \eqref{merdemerde}
by $\binom{m}{\frac{\epsilon p m}2}$. 
But this is precisely the event $M^n_\epsilon(\vec{r})$. Hence,
\begin{equation}
\label{boundo}
\P(M^{n}_\epsilon(\vec{r})^c)\leq 
\binom{m}{\frac{\epsilon p m}2} \exp\left(-cd^{2\alpha-1}\frac{\epsilon p m}2\right)
\le \left(\frac{2e}{\epsilon p}\right)^{\frac{\ep pm}2}
\exp\left(-cd^{2\alpha-1}\frac{\ep pm}2\right).
\end{equation}
Next we need the bound for $M^{nc}_\epsilon$:
\begin{equation}
\label{vogel}
\P((M^{n}(\epsilon))^c)\leq \sum_{\vec{r}\in\mathcal{R}} 
\P(M^{n}_\epsilon(\vec{r})^c)
\end{equation}
using the bound \eqref{boundo} and since there are less
than $\binom{n}{m}$ elements in $\mathcal{R}^n$,
\eqref{vogel} becomes
\begin{equation}
\label{lac}
\P((M^{n}(\epsilon))^c) \leq 
\binom{n}{m}\left(\frac{2e}{\epsilon p} \right)^{\frac{\ep pm}2}
\exp\left(- cd^{2\alpha-1}\frac{\epsilon p m}2\right).
\end{equation}
$$\P((M^{n}(\epsilon))^c)\leq e^{\frac nu\left(1+\mbox{Ln }2d+\frac{\ep p}2
\left(1-\mbox{Ln }\frac{\ep p}2\right)-cd^{2\alpha-1}\frac{\ep p}2\right)},
$$
since $\binom{n}{m}\le\frac{n^m}{m!}\le\left(\frac{en}m\right)^m$, and
since $n=2dm$.
\end{proof}

{\bf Proof of Theorem~\ref{theoremfluct}.}
By Theorem~\ref{equivalence}, in order to prove that
$\var LC_n=\Theta(n)$ it is enough to show the high
probability of a bias as in \eqref{bias}.
Now \eqref{Pkappa} asserts that
the probability of the bias
\begin{equation}
\label{expectedincrease}
\P\left(\E\left( |LCS(\tilde{X}; Y)|-|LCS(X; Y)|
\big|X,Y\right)\le
d^\beta (\kappa(1-2\epsilon)-2\epsilon)
\right),
\end{equation}
is bounded above by
\begin{equation}
\label{KOQ}
\P((K^n)^c(\epsilon))+\P((O^n)^c)+\P((M^n)^c(\epsilon)).
\end{equation}
Lemmas~\ref{lemmaK}, \ref{lemmaO} 
and \ref{lemmaQ}, 
imply that the bound \eqref{KOQ}, is exponentially
small in $n$.
So, with probability close to one, the expected change \eqref{expectedincrease},
is larger than $d^\beta (\kappa(1-2\ep)-2\ep)$.
In order to apply Theorem~\ref{equivalence}, we would need a bias 
larger than $c_1d^\beta$, where $c_1>0$ can be any constant
not depending on $n$ and $d$.
 To achieve this, simply take $\epsilon>0$
small enough so that
\begin{equation}
\label{boundkappa}
\kappa(1-2\epsilon)-2\epsilon >0.
\end{equation} 
e.g., $0<\epsilon= \kappa/4(\kappa+1)$.
With this choice of $\epsilon$, 
the bound \eqref{boundkappa} is equal to $\kappa/2$
and, in turn,
the expected conditional increase
\eqref{expectedincrease} is at least equal to $\kappa d^\beta/2$,
with high probability.
Therefore, by Theorem~\ref{equivalence}, 
it follows
that $\var {LC_n}=\Theta(n)$, for
$d$ large enough but fixed.
This finishes the proof.\hfill\rule{0.5em}{0.5em}

\medskip
\centerline{\bf On the  choice of the constants}

There may be several optimal alignments of $X=X_1\cdots X_n$
and $Y=Y_1\cdots Y_n$. Chose any of them and denote it
by $\vec{a}$. (Hence, $\vec{a}$ is a random alignment.)
Let $0=R_0<R_1<\cdots<R_m=n$ be random variables so that
the optimal alignment $\vec{a}$ aligns the following piece
of $X$: 
\begin{equation}
\label{thatpiece}
X_{2d(i-1)+1}X_{2d(i-1)+2}\cdots X_{2di}
\end{equation} 
 to the following piece of $Y$:
\begin{equation}
\label{Ypiece}
Y_{R_{i-1}+1}Y_{R_{i-1}+2}\cdots Y_{R_i}
\end{equation}
for all $i=1,2,\ldots,m$.

In other words, the optimal alignment score, that is the LCS
is obtained by aligning the string \eqref{thatpiece} with the string
\eqref{Ypiece},
for all $i=1,2,\ldots,m$. Hence,  the length of the LCS is
\begin{align*}
|LCS(X;Y)|=&
|LCS(X_1\cdots X_n;Y_1\cdots Y_n)|\\
=&\sum_{i=1}^m 
|LCS(X_{2d(i-1)+1}X_{2d(i-1)+2}\cdots X_{2di};
Y_{R_{i-1}+1}Y_{R_{i-1}+2}\cdots Y_{R_i})|.
\end{align*}
In order to get 
the desired bias we only need to verify two things:

First that most pieces \eqref{thatpiece} get aligned to
a piece of $Y$ whose length is not too dissimilar. (More
precisely we want $R_{i+1}-R_{i}$ to be in the interval
\eqref{interval} for most $i=1,2,\ldots,m$.)
 The event $K^n(\epsilon)$ takes
care of this.

Second, that most of the pieces \eqref{thatpiece},
for $i=1,2,\ldots,m$, are such that if they contain a long block
and the long block gets changed to iid of the same length,
then this results in an increase of the LCS which is proportional
to the length of the long block. This is the event $M^n(\epsilon)$.

From $K^n(\epsilon)$ and $M^n(\epsilon)$ it follows, 
under choices of parameters, then
the high probability of the biased effect of replacing
a long block by iid.
Let us show next that these choices are not mutually exclusive. 

Recall that $q$ is a measure of how similar in length two sequences
which we align are. More precisely,
$$\gamma_k(n,q):=
\frac{\E(|LCS(V_1V_2\cdots V_{n-nq};W_1W_2\cdots W_{n+nq})|)}{n}$$
where $V_1,V_2,\ldots$ and $W_1,W_2,\ldots$ are two independent iid
sequences with $k$ equiprobable symbols,
and
$$\gamma_k(q):=\lim_{n\rightarrow\infty} \gamma_k(n,q).$$
So, given two iid strings one of length $j$ and the other of length
$\ell$, then by our very definition the expected length of the LCS is
$$\E(|LCS(X^*_1X^*_2\cdots X_j^*;Y_1Y_2\cdots Y_l)|)=
\frac{\ell+j}{2}\gamma_k\left(\frac{\ell+j}{2},q\right),$$
where
$$q=\frac{\ell-j}{\ell+j}.$$
When, the two strings under consideration have equal length then $q=0$.
Otherwise, their average length remaining constant, we have that
$q$ increases as their difference in length increases.
As already mentioned $q\mapsto \gamma_k(q)$ is concave and symmetric
around $q=0$ (see \cite{amsaluhoudrematzi2}) and we let $p_M>0$ be the largest real so
that $\gamma_k$ is constantly equal to its maximum on $[-p_M,p_M]$.
Let us next mention how we chose our parameters:
\begin{enumerate}
\item{}We first are going to determine the constant $q^e_A$ used in the proof
of Theorem~\ref{oneblocktheorem}.
For this we use the third of our conditions in Subsection \ref{conditions}.
By that condition and by continuity, we can 
 find $q^e_A>p_M$ so that
$$\left|\frac{\gamma'_k(q-)}{2}\right|<\left|\frac{\gamma_k(q)}{2}-\frac{1}{k}\right|$$
for all
$$q\in[-q^e_A,q^e_A].$$  This
condition which $q^e_A$ satisfies is then used in the Appendix
to prove that replacing the long block by iid has a biased effect
in the case of strings of length of order linear in $d$. See, for this
inequality, \eqref{regensdorff} and \eqref{driftwood} in the proof
of Theorem \ref{oneblocktheorem}.
\item{} Second, we chose $q^e$ to be any value which is strictly between
$p_M$ and $q^e_A$:
$$q^e\in(p_M,q^e_A),$$ 
and note that
$$\gamma_k(q^e)<\gamma_k(0)=\gamma_k(p_M).$$
Recall that $\gamma_k(q^e)$ is denoted by $\gamma_k^e$
and that $q^e$ is the value for which we show that
in an optimal alignment of $X=X_1\cdots X_n$ and 
$Y=Y_1\cdots Y_n$ most of the strings
\eqref{thatpiece} get aligned with a string 
\eqref{Ypiece} which has its length not too different from
the first string in the sense that the ``$q$ for the 
two strings'' is within $[-q^e,q^e]$. This is the content 
of the event $K^n(\epsilon)$.

\item{} The bias defined by $\kappa>0$ which does not depend
on $d$ in Theorem \ref{oneblocktheorem} is determined
as soon as $q^e$ and $q^e_A$ are given. (See the proof
of Theorem \ref{oneblocktheorem}.) 
Given $\kappa>0$ which defines the 
positive bias in the one long block situation of Theorem
\ref{oneblocktheorem} we can now determine $\epsilon>0$
which would give a bias in the multi-long block situation.
As a matter of fact, by Lemma~\ref{combinatorics},
as soon as the events $K^n(\epsilon)$, 
$M^n(\epsilon)$ and $O(\epsilon)$ all hold, the expected increase in LCS
obtained by replacing a randomly chosen long block by iid
is at least
\begin{equation}
\label{swimming}
d^\beta(\kappa(1-2\epsilon)-2\epsilon).
\end{equation}
The lower bound above is only valuable if it is positive.
This can be obtained by  simply choosing $\epsilon>0$ to satisfy:
\begin{equation}
\label{COND2}\frac{\kappa}{2+2\kappa}>\epsilon.
\end{equation}
The inequality \eqref{COND2} holding, ensures that a 
strictly positive 
bias holds for the expected increase obtained by replacing a randomly chosen
long block by iid. Now, such a positive bias needs to hold with
high probability in order to get the main result of this paper.
(See Theorem \ref{equivalence}.) But, Lemma \ref{combinatorics}
shows that the positive bias of size at least \eqref{swimming}
is given as soon as all three events: $K^n(\epsilon)$, $M^n(\epsilon)$
and $O^n(\epsilon)$ all hold. 
So, the only thing
we need to check, is that these events hold with high probability
also for the parameters $\epsilon>0$ and $q^e>p_M$ which 
we have   determined so far. Indeed, $\epsilon$ and $q^e$
have already been determined and can no longer
be chosen freely. 
But, for any value of $\epsilon>0$ and $q^e>p_M$,
the event $K^n(\epsilon)$, for $d$ large enough,
will hold with high probability, and so will $O^n$ hold. 

For any given $\epsilon>0$ fixed, the event
$M^n(\epsilon)$, 
 holds with high probability
for $d$ large enough, as soon as the bias
effect holds in the one-long block case with high probability.
But this is guaranteed by Theorem \ref{oneblocktheorem}
and we have already chosen the parameters $q^e$ 
so that Theorem \ref{oneblocktheorem} holds.
\end{enumerate}

\section{Appendix}
\subsection{Outline of proof for biased effect in one-long-block-only case}
The current paper relies on results from 
\cite{amsaluhoudrematzi2} showing that in the one-long block
situation the effect of replacing the long block with iid
is typically linear in the length of the long block. However,
the result is formulated
with the two strings having both length exactly equal to $2d$.
For our current purpose, we need 
the first sequence
to have length exactly $2d$, but the second sequence
to have length close to $2d$ instead of exactly $2d$.
The proof, in this slightly more general situation,
is very similar to the one provided in \cite{amsaluhoudrematzi2},
so we  provide its quick outline.
Let us first formulate our theorem which is a slight generalization
of a corresponding result in \cite{amsaluhoudrematzi2}:

\begin{theorem}
\label{oneblocktheorem}
Let $1/2<\alpha<\beta<1$.
There exist $\kappa>0$ independent of $d$, 
so that provided $\gamma^e_k$ is close enough
to $\gamma^*_k$ (independently of $d$), we have
for all $d$ large enough:
\begin{align*}
\P&(|LCS(X_1^*\cdots X_{2d}^*;Y_1\cdots Y_i)|\\
&-|LCS(X_1\cdots X_{2d};Y_1\cdots Y_i)|
\geq \kappa d^\beta\mid Z_1=1)\geq 1-\exp(-cd^{2\alpha-1})
\end{align*}
for all $i$ contained in the interval \eqref{interval}, and
some absolute constant $c>0$.
\end{theorem}

\begin{proof} This theorem is proved  
for a slightly more restricted situation as
Theorem 3.1 and Theorem 3.2 in \cite{amsaluhoudrematzi2}.
 There the second sequence
must have length exactly $2d$, while here we allow it
to have any length in the interval given in \eqref{interval}. 
However, in \cite{amsaluhoudrematzi2},
one also considers the effect of adding a piece of iid
to two strings which have length approximately $d$ rather than
exactly $d$. Since this is the crucial point, 
let us nonetheless show 
a shortened version of its proof for the current generalized case.

When for two sequences $X$ and $Y$ close to  $2d$, we replace an iid part
by a long constant block in one of them, this causes an expected loss
of the LCS. The long 
block has length $d^\beta$, were $1/2<\beta<1$ does not depend on
$d$.  
The variance cannot make up for the expected loss  since, 
by Hoeffding's inequality, 
the standard deviation is at most of order $\sqrt{d}$.  But, 
$d^\beta$ ($\beta>1/2$), the length of the long constant block, has an order 
of magnitude greater that $\sqrt{d}$. So, we need to estimate the expected
loss due to the long block. Here is how it is done:
the string $X$ of length $2d$
is iid except that in the middle there is a long block
of only one letter (we work on the probability conditional
on $Z_1=1$). This means
that there are three strings $X^a$, $B$ and $X^c$,
where $X^a$ and $X^c$ are iid strings and $B$ is
a block of length $d^\beta$. The concatenation
of the three string gives: 
$$X^a B X^c,$$
where $X^a$ and $X^c$ have same length equal to: $d-d^\beta=d+o(d)$.
The next ingredient is $Y$ which is an iid string of length $i$,
where $i$ is in the interval \eqref{interval}.
Hence $Y=Y_1Y_2\cdots Y_{i}$,
and let $0<i_1<i_2<i$, be integers, so that
$$Y^a=Y_1\cdots Y_{i_1},\quad 
Y^b=Y_{i_1+1}Y_{i_1+2}\cdots Y_{i_2},\quad 
Y^c=Y_{i_2+1}Y_{i_2+2}
\cdots Y_i.$$
 Again, in all these strings and substrings,
except for the long block of length $d^\beta$, we have $k$ equiprobably letters.
Let $\pi$ be an alignment, and let $Y^a$, $Y^b$,
and $Y^c$ denote the pieces respectively aligned with
 $X^a$, $X^b$, and $X^c$.

Next, we  modify the alignment $\pi$ to obtain a new alignment
$\bar{\pi}$ where the long block has been replaced
by the iid part of same length $X^b$. 
For this new alignment $\bar{\pi}$
do the following:
 align $X^a$ with $Y^aY^b$ instead of only with 
$Y^a$. The block $B$ gets replaced by $X^b$
and then $X^b$ is ``added to the alignment of $X^c$ with $Y^c$".
So, we  request that $\bar{\pi}$ aligns $X^a$ and $Y^aY^b$
in an optimal way and also that $X^bX^c$ gets aligned optimally
to $Y^c$. Thus the part of the alignment
score of $\bar{\pi}$, coming from aligning $X^a$ with $Y^aY^b$,
is equal to $|LCS(X^a;Y^aY^b)|$, while the second part of that alignment
yields a score of
$$|LCS(X^bX^c;Y^c)|.$$

 Schematically the two alignments are represented as:
$$\pi:\quad \begin{array}{c|c|c}
X^a &B &X^c\\\hline
Y^a &Y^b & Y^c
\end{array}
$$ 
and
$$\bar{\pi}:\quad \begin{array}{c|c|c}
X^a & &X^bX^c\\\hline
Y^aY^b & & Y^c
\end{array},
$$ 
with alignment scores:
\begin{align*}
&{\tt score \; of\;}\pi=|LCS(X^a;Y^a)|+|LCS(B;Y^b)|+|LCS(X^c;Y^c)|\\
&{\tt score \; of\;}\bar{\pi}=|LCS(X^a;Y^aY^b)|+|LCS(X^bX^c;Y^c)|.
\end{align*}
The difference between the two alignment scores has two sources:
first the loss of the  aligned letter pairs of the block $B$ which where aligned
with letters (and not with gaps) under $\pi$.

Second, the gain  due to 
 ``adding $Y^c$ to the alignment of $X^a$ with $Y^a$'' and 
``adding $X^b$ to the alignment of $X^c$
with $Y^c$''. 
Since there are $k$-equiprobable letters and
 $B$ consisting only of one letter, $|LCS(B,Y^b)|$
is the number of times that letter appears in the string $Y^b$.
Hence, 
\begin{equation}
\label{lulo}
\E|LCS(B;Y^b)|=|Y^b|/k
\end{equation}
Now the expected change in score
is
\begin{align*}
\E({\tt score\; of\;}\bar{\pi}-{\tt score\;of\;}\pi)&=
\E(|LCS(X^a;Y^aY^b)|-|LCS(X^a;Y^a)|)\\
&\quad +
\E(|LCS(X^bX^c;Y^c)|-|LCS(X^c;Y^c)|)\\
&\quad\qquad\qquad\qquad\qquad\qquad -
\E(|LCS(B;Y^b)|).
\end{align*}
The two first terms in the sum on the right side of  the last
 equation above can be lower-bounded using Lemma \ref{addinstring} proved below.
Together with \eqref{lulo}, this yields
\begin{align}\label{eins}
&\E({\tt score\; of\;}\bar{\pi}-{\tt score\;of\;}\pi)\\ \label{zwei}
&\geq
\frac{|Y^b|}{2}\left( \gamma_k(p_I^a)-|\gamma_k'(p_{II}^a-)|-\frac{2}{k}
\right)+
\frac{|X^b|}{2}\left( \gamma_k(p_I^b)-|\gamma_k'(p_{II}^b-)|\right)-
O(\sqrt{d}\ln d), 
\end{align}
where
\begin{equation}
\label{pIapIIa}p_I^a=\frac{|Y^a|-|X^a|}{|Y^a|+|X^a|},\quad 
p_{II}^a=\frac{|Y^a|+|Y^b|-|X^a|}{|Y^a|+|Y^b|+|X^a|},
\end{equation}
and
\begin{equation}
\label{pIbpIIb}
p_I^c=\frac{|X^c|-|Y^c|}{|X^c|+|Y^c|},\quad 
p_{II}^c=\frac{|X^c|+|X^b|-|Y^c|}{|X^c|+|X^b|+|Y^a|}.
\end{equation}
As in \cite{amsaluhoudrematzi2}, 
we now consider two cases:

{\bf Case I:} $\gamma_k(0)/2>1/k$. In that case Condition 3
of Subsection~\ref{conditions},
is equivalent to 
\begin{equation}
\label{regensdorff}
 \frac{\gamma_k(q)}{2}-\frac{1}{k}-\frac{|\gamma_k'(q+)|}{2}>0,
\end{equation}
for all $q\in[-p_M,p_M]$.

Now uniform continuity and the right continuity
of the derivative of a convex function, imply that
\eqref{regensdorff} holds on an interval which is even
slightly bigger than $[-p_M,p_M]$ and also if the two entries of $q$ 
are not exactly equal but just very close. Formally, there exists
$\delta>0$ and $q_A^e$ with $p_M<q_A^e$ 
 so that:
 $\forall q_1,q_2\in[-q_A^e,q_A^e]$ with $|q_1-q_2|\leq \delta$
\begin{equation}
\label{driftwood}
 \frac{\gamma_k(q_1)}{2}-\frac{1}{k}-\frac{|\gamma_k'(q_2-)|}{2}>0.
\end{equation}
Now, for large enough $d$, 
$p_I^a$ and $p_{II}^a$ are close to each other, while
$p_{I}^b$ is close to  $p_{II}^b$. Indeed,
\begin{equation}
\label{starbucks}
|p_I^a-p_{II}^a|\leq \frac{2|Y^c|}{|Y^a|+|X|^a}=O\left(\frac{d^\beta}{d}\right),\quad
|p_{I}^b-p_{II}^b|\leq \frac{|B|}{|X^c|+|Y^c|}=O\left(\frac{d^\beta}{d}\right).
\end{equation}
These last two inequalities follow from that
for any $s,r,t>0$,
$$\left|\frac{s+r-t}{s+r+t}-\frac{s-t}{s+t}\right|=
\left|\frac{r}{s+t}\left(1+\frac{s+r-t}{s+r+t} \right)
\right|\leq \frac{2r}{s+t}.$$
Then taking
$s=|Y^a|$, $t=|X^a|$ and $r=|Y^b|$ yields the first
inequality in \eqref{starbucks}. The
second is obtained similarly.

We now define the event $B^d$ to be the event that 
for any optimal alignment of $X^aBX^b$
with $Y_1\cdots Y_i$, the corresponding parameters
$p_I^a$, $p_{II}^a$, $p_I^b$, and $p_{II}^b$ satisfy:
\begin{equation}
\label{antonio}
p_I^a,p_{II}^a,p_{I}^b,p_{II}^b\in[-q_A^e,q_A^e].
\end{equation}
In other words, the event
$B^d$
holds if
for all $i_1,i_2\in[0,i]$  with $i_1<i_2$ such that
\begin{align*}
|LCS(X^aBX^c;Y_1\cdots Y_i)|&=
|LCS(X^a;Y_1\cdots Y_{i_1})|+|LCS(B;Y_{i_1+1}\cdots Y_{i_2})|\\
&\qquad\qquad\qquad +
|LCS(X^c;Y_{i_2+1}\cdots Y_i)|,
\end{align*}
we have that \eqref{antonio} holds, for the values of 
$p_I^a$, $p_{II}^a$, $p_I^b$ and $p_{II}^b$ as defined by
\eqref{pIapIIa} and \eqref{pIbpIIb}.

We leave it to the reader to prove that the event $B^d$ holds 
with a probability close to one up to an exponential small quantity
in $d$. The proof is very similar to the proof that 
the event $K^n(\epsilon)$ holds with high probability.
In other words, \eqref{antonio} holds, for when ``$\pi$ is an optimal
alignment, rather than an alignment defined by non-random constaints
as we have done so far''.
(Here the non-random constraints are that $X^a$ should be aligned
with $Y_1\cdots Y_{i_1-1}$ and $X^c$ should be aligned with $Y_{i_2}\cdots Y_i$.)
Hence, by \eqref{driftwood} and \eqref{starbucks} and provided that 
\eqref{antonio} holds,
we find that for $d$ large enough,
\begin{equation}
\label{driftwoodII}
 \frac{\gamma_k(p_I^a)}{2}-\frac{1}{k}-\frac{|\gamma_k'(p_{II}^a+)|}{2}>0,
\end{equation}
and 
\begin{equation}
\label{driftwoodIII}
 \frac{\gamma_k(p_I^b)}{2}-\frac{|\gamma_k'(p_{II}^b+)|}{2}>
\frac{1}{k}.
\end{equation}
We can now apply \eqref{driftwoodII} and \eqref{driftwoodIII}
 to \eqref{zwei} in order to find:
\begin{equation}\label{matzinger}
\E({\tt score\;of\;}\bar{\pi}-{\tt score\;of\;}\pi)\geq
\frac{|X^b|}{2k}-
O(\sqrt{d}\ln d).
\end{equation}
This last inequality is basically ``the bias needed
for when replacing the long block by iid''. Indeed,
$X^b$ having the same length as the long block, we have
$|X^b|=d^\beta$. So, indeed \eqref{matzinger}
shows an increase in score by a linear quantity in the long
block, for when we replace the long bock by iid.
The only problem remaining is that so far we have the inequality 
only for an alignment $\pi$ for which $i_1$ and $i_2$ are 
non-random. But, we need it for an optimal alignment,
for which ``$i_1$ and $i_2$ are random''.
  To overcome this difficulty, proceed as usual,
showing that for all $i_1,i_2$ (non-random) in a suitable interval,
 \eqref{zwei} holds. Then,  the optimal (random)
alignment  will typically fall into one of these 
``non-random values''.
This allows us to have  \eqref{matzinger}  also verified for 
when $\pi$ is the optimal alignment.  Let us make 
that argument more precise:
Let $I_1$ and $I_2$ denote the ``$i_1$ and $i_2$ of
an optimal alignment''. Hence, assume
that with probability $1$, we have
\begin{align*}
|LCS(X^aBX^c;Y_1\cdots Y_i)|=
|LCS(X^a;Y_1\cdots Y_{I_1-1})|&+|LCS(B;Y_{I_1}\cdots Y_{I_2-1})|\\
&+
|LCS(X^c;Y_{I_2}\cdots Y_i)|.
\end{align*}
Note that, so far, the alignment score of $\pi$ and $\bar{\pi}$
depends on the non-random entries $i_1<i_2$. Thus define
$f(i_1,i_2)$ to be the expected increase in alignment score:
$$f(i_1,i_2):=\E(({\tt score\; of\;}\bar{\pi}-{\tt score\;of\;}\pi\;)_{(i_1,i_2)}),$$
and $g(i_i,i_2)$ to be the difference:
$$g(i_1,i_2):=({\tt score\; of\;}\bar{\pi}-{\tt score\;of\;}\pi\;)_{(i_1,i_2)}-
f(i_1,i_2).$$
By definition, the random variables $I_1,I_2$ are such,
that when they replace $i_1,i_2$, then the score of $\pi$
becomes the LCS of the sequence with a long block:
$${\tt score\;of\;}\pi_{(I_1,I_2)}=|LCS(X^aBX^c;Y_aY^bY^c)|.$$
On the other hand, when replacing $i_1,i_2$ by
$I_1,I_2$, the alignment  $\bar{\pi}$
is not necessarily an optimal alignment, but its score
is then at most the optimal alignment-score.
So ${\tt score\;of\;} \bar{\pi}_{(I_1,I_2)}$ is
 a lower bound on the optimal alignment score
of $X^aX^bX^c$ with $Y^aY^bY^c$. Therefore,
the change in optimal
alignment score due to replacing the
long block by iid is bounded below in the following manner:
\begin{align}
&|LCS(X^aX^bX^c;Y^aY^bY^c)|-|LCS(X^aB X^c;Y^aY^bY^c)|\nonumber\\
&\label{fi1i2}\geq {\tt score\;of\;}\bar{\pi}\;_{(I_1,I_2)}-{\tt score\;of\;}\pi\;_{(I_1,I_2)}=f(I_i,I_2)+g(I_1,I_2).
\end{align}
On the event $B^d$ holds, then we can use inequality
\eqref{matzinger}, and the law of total probability for expectation,
gives
\begin{equation}
\label{samutsporting}
\E(f(I_1,I_2))\geq \P(B^d)\left(\frac{|X^b|}{2k}-
O(\sqrt{d}\ln d)\right)-\P((B^d)^c)2\left(|X^b|+|Y^b|  \right).
\end{equation}
In obtaining \eqref{samutsporting} we also used
that between the two alignments $\pi$ and $\bar{\pi}$,
only $X^b$ and $Y^b$ change in terms of what they get aligned to,
so the maximum possible change between the score of $\pi$
and the score of $\bar{\pi}$ is $2(|X^b|+|Y^b|)$.
The quantity $\P((B^d)^c)$ is exponentially small in 
$d$. Hence for $d$ large enough $\P(B^d)\geq 1/2$.
Also, $|X^b|=d^\beta$ and $|Y^b|$ is at most of linear order
in $d^\beta$. Taking the expectation in \eqref{fi1i2}
and using \eqref{samutsporting}, give
\begin{align} 
\label{ali}
\E(|LCS(X^aX^bX^c;Y^aY^bY^c)|&-|LCS(X^aB X^c;Y^aY^bY^c)|)\nonumber\\
&\qquad\qquad\qquad\ge \frac12\left(\frac{|X^b|}{2k}-
O(\sqrt{d}\ln d)\right)+\E(g(I_1,I_2)).
\end{align}
We can assume that for some constant $K>0$, not depending on
$d$, we have $i\leq Kd$. Now, by Hoeffding's inequality,
 for all $i_1,i_2\leq i$ and all $\Delta>0$,
\begin{equation}
\label{lake}
\P\left(|g(i_1,i_2)|\geq \Delta\ln d\sqrt{d}\right)\leq 2\exp(-c
\Delta^2 (\ln d)^2),
\end{equation}
where $c>0$ is a constant which depends neither on $d$ nor on $\Delta$.
Since $i\leq Kd$,
by \eqref{lake}, 
\begin{align*}
\P\left(\max_{i_1,i_2\leq i}|g(i_1,i_2)|\geq \Delta\ln d\sqrt{d}\right)&\leq 
2K^2d^2\exp(-c \Delta^2 (\ln d)^2)\\
&=2K^2\exp(-c\Delta^2\ln^2 d+2\ln d).
\end{align*}
Assuming $\Delta>1$ and assuming $d$ sufficiently large
so that $c\Delta^2 \ln d\geq 2+\Delta^2 $,
$$\P\left(\max_{i_1,i_2\leq i}|g(i_1,i_2)|\geq \Delta\ln d\sqrt{d}\right)\leq 
2K^2\exp(-\Delta^2),$$
thus
$$\E\left(\max_{i_1,i_2\leq i}|g(i_1,i_2))|\right)\leq 
O(\sqrt d\ln d ),$$
and also
$$\E(|g(I_1,I_2)|)\leq O(\ln d\sqrt{d}).$$
The last inequality in \eqref{ali}
gives
\begin{align}
\E(|LCS(X^aX^bX^c;Y^aY^bY^c)|&-|LCS(X^aB X^c;Y^aY^bY^c)|)\nonumber\\
&\quad \geq\frac12\left(\frac{|X^b|}{2k}-
O(\sqrt{d}\ln d)\right),
\end{align}
which is the bias we seeked since $|X^b|=d^\beta$.
\end{proof}

\begin{lemma}
\label{addinstring} Let $s,t,r>0$.
Let three iid strings, with $k$ equiprobable letters, be given by:
$$X^I:=X^*_1X^*_2\cdots X^*_t,$$
$$Y^I=Y_1\cdots Y_s,$$
$$Y^{II}:=Y_{s+1}\cdots Y_{s+r}.$$
Let $r$ be of smaller order than $s+t$: $r=o(s+t)$.
Then, when ``adding the string $Y^{II}$ to the alignment of $X^I$
with $Y^{II}$'', the expected increase in LCS is such that
\begin{align}\label{eqnlem1}
\E(|LCS(X^I;Y^IY^{II})|-|LCS(X^I;Y^I)|)&\geq 
r\left(\frac{\gamma_k(p_I)}{2}-\frac{|\gamma'(p_{II}-)|}{2} \right)\nonumber\\
&\qquad +
O(\sqrt{s+t}\ln(s+t)),\end{align}
where the proportions of the length of the strings
are defined as 
$$p_I:= \frac{s-t}{s+t},\quad p_{II}:=\frac{s+r-t}{s+r+t} 
.$$
\end{lemma}

\begin{proof}
By the very definition of the function $\gamma_k(\cdot ,\cdot )$ given in
\eqref{gammaknp}, 
\begin{equation}
\label{odin}
\E(|LCS(X^I;Y^IY^{II})|-|LCS(X^I;Y^I)|)=
n_{II} \gamma_k(n_{II},p_{II})-n_{I} \gamma_k(n_I,p_I).
\end{equation}
Now, from Alexander \cite{Alexander,alexander2},
$\gamma_k(n,p)$
converges at a rate $C\ln n /\sqrt{n}$ to $\gamma_k(n,p)$
where $C>0$ is an absolute constant.
Hence, the right side of \eqref{odin} becomes:
\begin{equation}
\label{dva}
n_{II}\gamma_k(n_{II},p_{II})-n_{I}\gamma_k(n_I,p_I)
=n_{II}\gamma_k(p_{II})-n_{I}\gamma_k(p_I)+O(\sqrt{s+t}\ln(s+t)).
\end{equation}
Note that 
\begin{equation}
\label{tre}
n_{II}\gamma_k(p_{II})-n_{I}\gamma_k(p_I)=
  n_{II}\frac{\Delta\gamma}{\Delta p}\Delta p
+\Delta n \gamma_k(p_I),
\end{equation}
where
$$p_I=\frac{s-t}{s+t},\quad p_{II}=\frac{s+r-t}{s+r+t},\quad \Delta p=p_{II}-p_I,$$
and
$$n_I=\frac{t+s}{2},\quad n_{II}=\frac{t+s+r}{2},\quad \Delta n=n_{II}-n_I=\frac{r}{2},$$
and also $\Delta\gamma=\gamma_k(p_{II})-\gamma_k(p_I)$.

{\bf First case: $s\geq t$.} Then $0<(s-t)/(s+r+t)\leq p_I$, and therefore
$$0<\Delta p<p_{II}-\frac{s-t}{s+r+t}=\frac{r}{s+r+t},$$ 
and 
$$0<\Delta p\leq \frac{r}{s+r+t}.$$
This last inequality implies that $n_{II}\Delta p\leq r/2$,
and so the right-hand side of \eqref{tre} becomes:
\begin{equation}
\label{chitirie}
 n_{II}\frac{\Delta\gamma}{\Delta p}\Delta p
+\Delta n \gamma_k(p_I)\geq
 \frac{r}{2}\left(\gamma_k(p_I)-|\gamma'(p_{II}-)|\right).
\end{equation}
In obtaining \eqref{chitirie}, 
the fact that $\gamma_k$ is concave and symmetric about the origin while
$p_{II}>p_{I}>0$, imples that:
$$\left|\frac{\Delta \gamma_k}{\Delta p}\right|\leq 
\gamma_k'(p_{II}-).$$              
Then, \eqref{odin}, \eqref{dva}, \eqref{tre}
and \eqref{chitirie} jointly imply
the desired result:
$$\E(|LCS(X^I;Y^IY^{II})|-|LCS(X^I;Y^I)|)
p\geq 
\frac{r}{2}\left(\gamma_k(p_I)-|\gamma'(p_{II}-)|\right)
+O\left(\sqrt{s+t}\ln(s+t)\right).$$

{\bf Second case:} $s<t$ and $s+t<r$. In this case $p_I\leq p_{II}<0$.
Since $\gamma_k$ is concave and symmetric about the origin, 
$\Delta\gamma>0$. This then leads to
$$n_{II}\gamma_k(p_{II})-n_{I}\gamma_k(p_I)=
  n_{II}\Delta\gamma
+\Delta n \gamma_k(p_I)\geq \Delta n\gamma_k(p_I).$$
In turn, together with \eqref{odin}, \eqref{dva} implies:
$$\E(|LCS(X^I;Y^IY^{II})|-|LCS(X^I;Y^I)|)
\geq 
\frac{r}{2}\gamma_k(p_I)
+O\left(\sqrt{s+t}\ln(s+t)\right),$$
which implies \eqref{eqnlem1}.

{\bf Third case:} $s<t$ and $s+r>t$. In that case, $p_I<0$ while
$p_{II}>0$. Again since $\gamma_k$ is concave and symmetric around the origin,
it is non-decreasing from $p_I$ to $0$, and non-increasing from
$0$ to $p_{II}$. Hence,
\begin{equation}
\label{superbagel}
n_{II}\gamma_k(p_{II})-n_{I}\gamma_k(p_I)=
  n_{II}\Delta \gamma
+\Delta n \gamma_k(p_I)\geq 
n_{II}\frac{\gamma_k(p_{II})-\gamma_k(0)}{p_{II}-0}p_{II}+\Delta n\gamma_k(p_I).
\end{equation}
Now, 
\begin{equation}
\label{littlelittle}
0<n_{II}p_{II}=\frac{s+r-t}{2}\leq\frac{r}{2},
\end{equation}
since $s-t<0$. Applying 
\eqref{littlelittle} to \eqref{superbagel}
finally yields
\begin{equation}
\label{lalala}
n_{II}\gamma_k(p_{II})-n_{I}\gamma_k(p_I)\geq
\frac{r}{2}(\gamma_k(p_I)-|\gamma_k'(p_{II}-)|  ).
\end{equation}
Again, by concavity and symmetry,
$|\gamma_k(p_{II})-\gamma_k(0)|/p_{II}$ is bounded above by
$|\gamma_k'(p_{II})|$. Then apply \eqref{odin}
and \eqref{dva} to \eqref{lalala} to obtain the desired result.
\end{proof}

\end{document}